\newtheorem{teo}{Theorem}[section]
\newtheorem{prop}[teo]{Proposition}
\newtheorem{lemma}[teo]{Lemma}
\newtheorem{cor}[teo]{Corollary}
\theoremstyle{definition}
\newtheorem{defi}[teo]{Definition}
\newtheorem{rem}[teo]{Remark}
\newtheorem{example}[teo]{Example}
\newcommand{\Ext}{\mathchoice{{\textstyle\bigwedge}}
	{{\bigwedge}}
	{{\textstyle\wedge}}
	{{\scriptstyle\wedge}}}
\newcommand{\lra}{\longrightarrow}
\newcommand{\ie}{\textit{i.e.\ }}
\newcommand{\resp}{\textit{resp. }}
\newcommand{\reg}{{\rm reg}}
\newcommand{\inv}{{\rm inv}}
\newcommand{\diag}{{\rm diag}}
\newcommand{\IP}{\mathbb{P}}
\newcommand{\IC}{\mathbb{C}}
\newcommand{\IZ}{\mathbb{Z}}
\newcommand{\IN}{\mathbb{N}}
\newcommand{\cF}{\mathcal{F}}
\newcommand{\cK}{\mathcal{K}}
\newcommand{\cO}{\mathcal{O}}
\newcommand{\cP}{\mathcal{P}}
\newcommand{\Aut}{\mathrm{Aut}}
\newcommand{\Alb}{\mathrm{Alb}}
\newcommand{\NS}{\mathrm{NS}}
\newcommand{\GL}{\mathrm{GL}}
\newcommand{\SU}{\mathrm{SU}}
\newcommand{\Sp}{\mathrm{Sp}}
\newcommand{\Spec}{\mathrm{Spec}}
\newcommand{\Pic}{\mathrm{Pic}}
\newcommand{\Cl}{\mathrm{Cl}}
\newcommand{\PP}{\mathrm{P}}
\newcommand{\id}{\mathrm{id}}
\newcommand{\age}{\mathrm{age}}
\newcommand{\ii}{\mathrm{i}}
\newcommand{\Hilb}[2]{{#2}^{[#1]}}
\newcommand{\Sym}[2]{{#2}^{(#1)}}
\newcommand{\Kum}[2]{{\rm K}_{#1}(#2)}
\title[Log-Enriques varieties]{Logarithmic Enriques varieties}
\author{Samuel Boissi\`ere}
\address{Samuel Boissi\`ere,
	Laboratoire de Math\'ematiques et Applications,
	UMR 7348 du CNRS,
	B\^atiment H3,
	Boulevard Marie et Pierre Curie,
	Site du Futuroscope,
	TSA 61125,
	86073 Poitiers Cedex 9,
	France}
\email{samuel.boissiere@univ-poitiers.fr}
\urladdr{https://sboissie.pages.math.cnrs.fr/home/}
\author{Chiara Camere}
\address{Chiara Camere, Universit\`a degli Studi di Milano,
	Dipartimento di Matematica,
	Via Cesare Saldini 50,
	20133 Milano, Italy} 
\email{chiara.camere@unimi.it}
\urladdr{https://sites.unimi.it/camere/en/index.html}
\author{Alessandra Sarti}
\address{Alessandra Sarti,
	Laboratoire de Math\'ematiques et Applications,
	UMR 7348 du CNRS,
	B\^atiment H3,
	Boulevard Marie et Pierre Curie,
	Site du Futuroscope,
	TSA 61125,
	86073 Poitiers Cedex 9,
	France}
\email{alessandra.sarti@univ-poitiers.fr}
\urladdr{https://sarti.pages.math.cnrs.fr/home/}
\date{\today}
\begin{document}

	\subjclass{14C05; 14E20; 14J10; 14J17; 14J28; 14J42; 14J50 }
	
	\keywords{holomorphic symplectic manifolds, symplectic singularities, Calabi-Yau varieties, abelian varieties, automorphisms, Enriques varieties}

	\begin{abstract}
		We introduce logarithmic Enriques varieties as a singular analogue of Enriques manifolds, generalizing the notion of log-Enriques surfaces introduced by Zhang. We focus mainly on the properties of the subfamily of logarithmic Enriques varieties that admit a quasi-\'etale cover by a singular symplectic variety and we give many examples.
	\end{abstract}

	\maketitle

	\section{Introduction}
	
	In the Enriques--Kodaira classification of smooth complex algebraic surfaces, an \emph{Enriques surface} is by definition a minimal surface $X$ of Kodaira dimension $\kappa = 0$, arithmetic genus $p_g(X) \coloneqq h^0(K_X) = 0$ and irregularity $q(X) \coloneqq h^1(\cO_X) = 0$. It follows that the canonical divisor $K_X$ is $2$-torsion and that the \'etale double cover defined by the $2$-torsion line bundle $\cO_X(K_X)$ realizes $X$ as the quotient of a K3 surface $Y$ by a fixed point free nonsymplectic involution (see for instance~\cite[Chapter~VIII]{Beauville_book}). As a consequence, its fundamental group $\pi_1(X)$ is isomorphic to $\IZ/2\IZ$. The existence of a nonsymplectic automorphism imposes that $Y$ is projective, hence $X$ too (see for instance~\cite[Proposition~6]{Beauville_remarks}).
	Conversely, every fixed point free involution on a K3 
	surface $Y$ is nonsymplectic by the holomorphic Lefschetz fixed point formula. This imposes that $Y$ is projective, and the quotient variety is an Enriques surface. 
	
	In a different flavour, starting from a smooth complex algebraic surface $X$ with nontrivial but $2$-torsion canonical bundle, the \'etale degree two cover $Y\to X$ defined by the canonical sheaf is such that $K_Y = 0$, so $p_g(Y) = 1$  and $Y$ is a minimal surface which is either a K3 surface or an Abelian 
	surface.  If we further assume that $q(X) = 0$, then the Euler characteristic of $X$ is $\chi(X, \cO_X) = 1$, so $\chi(Y, \cO_Y)  = 2$, hence $q(Y) = 0$ and $Y$ is a K3 surface.
	We refer to Beauville~\cite[Chapter~VIII]{Beauville_book}, Barth--Hulek--Peters--van de Ven~\cite{BHPV} or Cossec--Dolgachev~\cite{CossecDolgachev} for more characterizations of Enriques surfaces.
	
	These equivalent points of view on Enriques surfaces offer different strategies to extend their definition to higher dimension in the nonsingular setup, which have been explored in~Boissi\`ere--Nieper-Wisskirchen--Sarti~\cite{BNWS}, Oguiso--Schr\"oer~\cite{OS1} and Yoshikawa~\cite{Yoshikawa}: these are called {\it Enriques manifolds}. A recent interest for Enriques manifolds occurs in the framework of the investigation of the Morrison--Kawamata cone conjecture by Pacienza and the third author~\cite{PS} and more recently by Gachet~\cite{Gachet}.  In light of recent developments on the Beauville--Bogomolov decomposition of singular varieties with trivial first Chern class~\cite{Druel, GKP_singular, HoringPeternell}, it is interesting to extend further the definition of Enriques manifolds to the singular setting.  
	
	In this paper, after reviewing the theory of singular symplectic varieties and of singular Calabi--Yau varieties in Section \ref{s:prelim}, and after discussing some properties of $K$-torsion varieties in Section \ref{s:torsion_K_trivial}, we first propose in Definition~\ref{def:enriques} a general definition of Enriques manifolds that interpolates with the previous ones cited above and then we propose in Definition~\ref{def:log_enriques} an extension of this notion to the singular setup, which we call \emph{logarithmic Enriques varieties}, (log-Enriques for short) following the term \emph{logarithmic Enriques surfaces} introduced by Zhang~\cite{Zhang}. Then in Section \ref{s: symplectic type}  we discuss general properties and characterizations of these varieties in the special case of logarithmic Enriques varieties \emph{of symplectic type}, \ie  those logarithmic Enriques varieties admitting a cyclic quasi-\'etale cover by a symplectic variety. In particular, in Proposition \ref{prop:conseq2} we show that if the quasi-\'etale cover $Y$ is a PSV (respectively an ISV) then the logarithmic Enriques variety $X$ is the quotient of $Y$ by the action of a purely nonsymplectic automorphism.	
	Finally, in Section \ref{s:examples_nonsingular} we give many examples: in particular, in the case of Enriques manifolds, we review the examples already mentioned in the cited references and we also construct some new examples of log-Enriques varieties. 
	
	Angel David R\'ios Ortiz, Francesco Denisi, Nikolaos Tsakanikas and Zhixin Xie~\cite{DRTX} have  been working independently and simultaneously on a variant of this notion that they called \emph{primitive Enriques varieties}, which is compatible with our definitions, although they explore different developments as ours.
	
	The authors warmly thank Francesco Denisi, Christian Lehn, Martina Monti, Gianluca Pacienza, Arvid Perego, Giulia Saccà, Roberto Svaldi and Nikolaos Tsakanikas for helpful discussions. The first and the  third author have been partially funded by the ANR/DFG project ANR-23-CE40-0026 ``Positivity on K-trivial varieties". The second author received partial support from grants Prin Project 2020 "Curves, Ricci flat Varieties and their Interactions" and Prin Project 2022 "Symplectic varieties: their interplay with Fano manifolds and derived categories"; she is a member of the Indam group GNSAGA and was supported by the International Research Laboratory (IRL)  LYSM "Ypatia Laboratory for Mathematical Sciences".
	We  thank the anonymous referees for their careful reading, corrections and suggestions of improvement.
	
	\section{Preliminaries}
	\label{s:prelim}

	In this paper, the term ``variety'' denotes an integral separated noetherian scheme of finite type over the field of complex numbers.
	Let $X$ be a projective variety.
	We denote by $X_\reg$ its regular locus with its  open embedding $\iota\colon X_\reg \hookrightarrow X$, by $\Omega^1_{X_\reg}$  the sheaf of K\"ahler differentials and by $\Omega^p_{X_\reg}\coloneqq\Ext^p \Omega^1_{X_\reg}$ the sheaf of $p$-forms. For any $p\in \IN$ such that $0\leq p\leq \dim X$, we define:
	\[
	\Omega^{[p]}_X\coloneqq \iota_\ast  \Omega^p_{X_\reg}.
	\]
	Assuming that $X$ is normal, $\Omega^{[p]}_X$ is a reflexive sheaf isomorphic to the bidual $\left(\Omega_X^p\right)^{\ast\ast}$, whose sections are called \emph{reflexive forms} (see for instance~\cite{GKKP, Hartshorne, Reid_canonical}). We denote in particular the \emph{canonical sheaf} by:
	\[
	\omega_X\coloneqq\Omega^{[\dim X]}_X,
	\]
	it is a divisorial sheaf whose associated linear equivalence class of Weil divisors is the \emph{canonical divisor} $K_X$ of~$X$. Following the definition given in~\cite[p.282]{Reid_canonical} for the divisorial sheaf associated to a Weil divisor, we have $\omega_X = \cO_X(K_X)$.
	With a slight, but customary abuse of notation, we denote:
	\[
	\omega_X^{[i]} = \cO_X(i K_X) = \left(\omega_X^{\otimes i}\right)^{\ast\ast}, \quad\forall i\geq 0.
	\]

	Recall that a normal variety $X$ has \emph{rational singularities} if for any resolution of singularities $f\colon Z\to X$, one has $R^{>0}f_\ast \cO_Z = 0$.
	By the extension theorem of Kebekus--Schnell~\cite[Corollary~1.8]{KS}, if $X$ is a normal variety with rational singularities, the pullback over $X_\reg$ of any reflexive form
	extends regularly on any resolution of singularities.
	We also recall that by the Elkik--Flenner theorem, 
	rational Gorenstein singularities are exactly canonical singularities of index one (see~\cite[\S3(C) \& p.363]{ReidYPG}, \cite[Claim 2.3.1]{Kollar} and \cite[Lemma 5.12]{KM}).
	
	Using these notions, we reformulate Beauville's definition~\cite[Definition~1.1]{Beauville_symplectic} of a projective \emph{symplectic variety} as: a normal projective variety~$X$ with rational singularities, whose regular part admits a symplectic holomorphic form. The complex dimension of $X$ is thus necessarily even.
	
	A \emph{finite cover map} is a finite and surjective morphism $f\colon \widetilde X\to X$ between normal projective varieties of the same dimension. It is called \emph{quasi-\'etale} if it is \'etale in codimension one ~\cite[Definition~3.2\&3.3]{GKP_etale}: this means that there exists a closed subset $Z\subset \widetilde X$ of codimension at least two such that the restriction $f\colon\widetilde X\setminus Z\to X$ is \'etale.
	The \emph{augmented irregularity}~\cite[Definition~3.1]{GKP_singular} of a normal projective variety~$X$ is the supremum $\tilde q(X)\in \IN\cup\{\infty\}$ of the irregularities~$q(\widetilde X)\coloneqq h^1(\widetilde X, \cO_{\widetilde X})$ for all finite quasi-\'etale covers $\widetilde X\to X$. 
	
	A \emph{Calabi--Yau variety} (CY) is a normal projective variety $X$ such that $\omega_X\cong \cO_X$,  with rational singularities and such that for any finite quasi-\'etale cover  $f\colon \widetilde X\to X$, one has  $H^0(\widetilde X, \Omega_{\widetilde X}^{[q]})= 0$ for all $0<q<\dim X$ (see~\cite[Definition~8.16.1]{GKP_singular} or~\cite[Definition~1.3]{GGK}). By Hodge symmetry, $\tilde q(X) = 0$ and $X$ is simply connected if $X$ has even dimension~\cite[Corollary 13.3]{GGK}. Nonsingular CY varieties correspond to the CY manifolds as defined in~\cite{Beauville_c1nul}.
	A weaker variant co-exists in the literature, without the condition on all \'etale covers, assuming only that $X$ is normal with rational singularities, $\omega_X\cong \cO_X$ and  $H^0( X, \Omega_{X}^{[q]})= 0$ for all $0<q<\dim X$: to avoid confusion, we call here these latter  \emph{weak CY varieties}.

	An \emph{irreducible symplectic variety} (ISV) is a projective symplectic variety $X$ admitting a  symplectic form $\sigma_X\in H^0(X,\Omega^{[2]}_X)$ such that  for any finite quasi-\'etale cover $f\colon \widetilde X \to X$, the reflexive pullback $f^{[\ast]}(\sigma_X)$, as defined in~\cite[\S II.4]{GKKP}, generates the exterior algebra of global sections of~$\Omega^{[\ast]}_{\widetilde X}$ (see~\cite[Definition~8.16.2]{GKP_singular} and~\cite[Definition~1.1]{BGL}). In particular, we have  $H^0(X,\Omega^{[2]}_X) = \IC \sigma_X$. Clearly $X$ is even-dimensional, $\tilde q(X) = 0$ and $X$~is simply connected by~\cite[Corollary~13.3]{GGK}, but $X_\reg$ is not always simply connected (see \cite[Remark 1.11]{PR}). Nonsingular ISVs coincide with the \emph{irreducible holomorphic symplectic} (IHS) manifolds introduced by Beauville~\cite{Beauville_c1nul}.
	
	A \emph{primitive symplectic variety} (PSV), as considered in particular by  Bakker--Lehn~\cite{BL_moduli} is a projective symplectic variety $X$ such that $q(X) =  0$
	and $H^0(X, \Omega^{[2]}_{X})$ is one-dimensional. ISV are PSV but the augmented irregularity of a PSV may be nonzero: take for instance the quotient of an abelian surface by~$\pm 1$, it is a PSV since its minimal resolution is a Kummer surface but by construction its augmented irregularity is positive.
	We refer to \cite{BL_global, Perego_examples} for a more detailed discussion on these definitions and their possible variants. Note that the names ISV and PSV are interchanged in Schwald~\cite[Definition~1]{Schwald}.
	
	Checking directly the vanishing of the augmented irregularity on a given variety is a difficult problem in general. We give a sufficient criterion for a variety to be CY or ISV, following~\cite{BCGPSV_Prym,PR}. 
	
	\begin{prop}
		Let $X$ be a normal projective variety with rational singularities and such that $\omega_X\cong \cO_X$, and assume that $\pi_1(X_{\reg})=\{1\}$.
		\begin{enumerate}[label=(\roman*)]
			\item If there exists a dominant rational map $\varphi \colon Z\dashrightarrow X$, where $Z$ is a CY variety, then $X$ is also a CY variety.
			
			\item If $X$ is a symplectic variety and if there exists a dominant rational map $\varphi \colon Z\dashrightarrow X$, where $Z$ is an ISV, then $X$ is also an ISV.
		\end{enumerate}
	\end{prop}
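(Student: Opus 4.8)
The plan is to use the hypothesis $\pi_1(X_\reg)=\{1\}$ to eliminate the quasi-\'etale coverings occurring in the definitions of Calabi--Yau variety and of ISV, and then to transport the relevant vanishing (resp.\ generation) statements about reflexive forms from $Z$ to $X$ along $\varphi$. First, I would observe that every finite quasi-\'etale covering $f\colon\widetilde X\to X$ is an isomorphism: indeed $f$ is \'etale over the complement of a closed subset $S\subset X_\reg$ of codimension at least two, so its restriction is a finite \'etale covering of $X_\reg\setminus S$, connected because $\widetilde X$ is irreducible; deleting a closed analytic subset of codimension at least two does not change the fundamental group of the smooth variety $X_\reg$, so this covering is classified by a transitive action of $\pi_1(X_\reg\setminus S)\cong\pi_1(X_\reg)=\{1\}$ on its fibre and hence has degree one, and being finite and birational onto the normal variety $X$ it is an isomorphism. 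Therefore, in the definitions of Calabi--Yau variety and of ISV, the conditions quantified over all finite quasi-\'etale coverings need only be verified for the identity of $X$; since in addition $\omega_X\cong\cO_X$ and $X$ has rational singularities, what remains is to prove that $H^0(X,\Omega^{[q]}_X)=0$ for $0<q<\dim X$ in case (i), and that a symplectic reflexive form $\sigma_X\in H^0(X,\Omega^{[2]}_X)$ (which exists because $X$ is a symplectic variety) generates the exterior algebra $\bigoplus_p H^0(X,\Omega^{[p]}_X)$ in case (ii).

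Next I would produce an injection $H^0(X,\Omega^{[p]}_X)\hookrightarrow H^0(Z,\Omega^{[p]}_Z)$ for every $p$. Fix resolutions $\hat X\to X$ and $\hat Z\to Z$. As $X$ has rational singularities, the Kebekus--Schnell extension theorem~\cite[Corollary~1.8]{KS} identifies $H^0(X,\Omega^{[p]}_X)$ with $H^0(\hat X,\Omega^p_{\hat X})$; similarly $H^0(Z,\Omega^{[p]}_Z)\cong H^0(\hat Z,\Omega^p_{\hat Z})$, since $Z$ has rational singularities (being Calabi--Yau or an ISV) and global holomorphic $p$-forms are a birational invariant of smooth projective varieties, so the resolutions may be chosen freely. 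After replacing $\hat Z$ by a suitable blow-up, the rational map $\hat Z\dashrightarrow X\dashrightarrow\hat X$ induced by $\varphi$ becomes a morphism $g\colon\hat Z\to\hat X$, dominant because $\varphi$ is. By generic smoothness $g$ is a submersion at a general point $z\in\hat Z$; choosing $z$ so that moreover a prescribed nonzero $\omega\in H^0(\hat X,\Omega^p_{\hat X})$ does not vanish at $g(z)$, the codifferential of $g$ at $z$ is injective, so $g^\ast\omega$ does not vanish at $z$. Hence $g^\ast$ is injective on global $p$-forms, and composing the two identifications with $g^\ast$ yields the desired injection.

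To conclude, note that $\dim X\le\dim Z$ because $\varphi$ is dominant. In case (i), for $0<q<\dim X$ one has $H^0(Z,\Omega^{[q]}_Z)=0$ since $Z$ is a Calabi--Yau variety, hence $H^0(X,\Omega^{[q]}_X)=0$, and by the first paragraph $X$ is a Calabi--Yau variety. In case (ii), write $\dim X=2n$; since $\sigma_X$ is nondegenerate on $X_\reg$, $\sigma_X^{\,p}\neq 0$ for $0\le p\le n$. For odd $q$ the injection maps $H^0(X,\Omega^{[q]}_X)$ into $H^0(Z,\Omega^{[q]}_Z)=0$, so it vanishes; for even $q=2p\le 2n$ it maps $H^0(X,\Omega^{[2p]}_X)$ into the one-dimensional space $H^0(Z,\Omega^{[2p]}_Z)=\IC\,\sigma_Z^{\,p}$, whence $H^0(X,\Omega^{[2p]}_X)=\IC\,\sigma_X^{\,p}$. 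This is precisely the statement that $\sigma_X$ generates $\bigoplus_p H^0(X,\Omega^{[p]}_X)$ as an exterior algebra, so by the first paragraph $X$ is an ISV.

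The only delicate point is the second paragraph: making sense of the pullback of reflexive forms along the a priori merely rational map $\varphi$. This is handled by passing to resolutions, where the rational singularity hypotheses on both $X$ and $Z$ make the Kebekus--Schnell identifications available; the remaining tasks---resolving the indeterminacy of $\varphi$ into a dominant morphism of smooth projective varieties, and checking that its pullback is injective on \emph{global}, not merely generic, forms---are routine.
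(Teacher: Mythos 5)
Your proof is correct and follows essentially the same route as the paper: use $\pi_1(X_\reg)=\{1\}$ together with Zariski--Nagata purity to reduce the quasi-\'etale conditions to the identity cover, resolve the indeterminacy of $\varphi$ to a dominant morphism of resolutions, and transport the vanishing/generation of reflexive forms back to $X$ via an injective pullback on global forms. The only differences are cosmetic: the paper invokes~\cite[Proposition~5.8]{Kebekus} for the injectivity that you prove by hand via generic smoothness, it reaches $h^0(Z,\Omega_Z^{[q]})$ through a chain of Hodge-symmetry identities rather than your direct Kebekus--Schnell identification, and it delegates case~(ii) to~\cite[Proposition~3.15]{BCGPSV_Prym} whereas you carry out the (correct) dimension count on $H^0(Z,\Omega_Z^{[2p]})=\IC\,\sigma_Z^{p}$ explicitly.
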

	
	\begin{proof}
		The argument for ISV is given in~\cite[Proposition 3.15]{BCGPSV_Prym}. The CY case is similar, but we write it for completeness. By the Zariski--Nagata purity theorem~\cite{Nagata, Zariski} (see also~\cite[Theorem~2.4]{Zong}), every quasi-\'etale cover is \'etale over the nonsingular locus. Since $X_\reg$ is simply connected, every quasi-\'etale cover of $X$ is an isomorphism by the Zariski Main Theorem, so we only need to show that $H^0( X, \Omega_{ X}^{[q]})= 0$ for all $0<q<\dim X$. Consider the following diagram:
		\[
		\xymatrix{Z'\ar[d] \ar[r] & X'\ar[d]  \\
			Z\ar@{-->}[r] & X }    
		\]
		where $X'$ is a resolution of singularities of $X$ and $Z'$ is a nonsingular resolution of indeterminacies of the rational dominant map $Z\dashrightarrow X'$. For all ${0<q<\dim X}$, by~\cite[Proposition 5.8]{Kebekus} applied to the dominant morphism $\gamma\colon Z'\rightarrow X$, the reflexive pullback morphism
		$\gamma^{[\ast]}\colon H^0(X,\Omega_X^{[q]})\to H^0(Z',\Omega_{Z'}^{q})$
		is injective. By Hodge symmetry~\cite[Proposition~6.9]{GKP_singular}, we have $h^0(Z',\Omega_{Z'}^{q})=h^q(Z', \cO_Z')$. Since the singularities of $Z$ are rational and since $Z'\to Z$ is birational, we have $h^q(Z',\cO_{Z'})= h^q(Z,\cO_Z)$. By Hodge symmetry again, since $Z$ has canonical singularities we have $h^q(Z, \cO_Z)=h^0(Z,\Omega_Z^{[q]})$, which is zero since $Z$ is CY. We  get $h^0(X, \Omega_X^{[q]})= 0$ for all $0<q<\dim X$.
	\end{proof}

	\section{K-torsion varieties}
	\label{s:torsion_K_trivial}
	
	Let $X$ be a normal projective variety. If $X$ has only canonical singularities and if its canonical divisor is numerically trivial, by Kawamata~\cite[Theorem~8.2]{Kawamata} we get that $K_X$ is torsion and that  $\kappa(X) = 0$ (see also~\cite[Chapter~V, Corollary 4.9]{Nakayama} for a more general statement).
	We assume now that
	the singularities of~$X$ are klt, see~\cite[Definition~2.34]{KM} and~\cite[p.43]{Kollar}. If $K_X$ is numerically effective, the abundance conjecture predicts that some multiple $dK_X$ of $K_X$ is base-point-free.
	If we further assume that $\kappa(X) = 0$, the morphism induced by the linear system $|dK_X|$ maps to a point, so $K_X$ is torsion. This motivates the following definition:

	\begin{defi}\label{def:K_torsion}
		A \emph{$K$-torsion variety} is a normal projective variety $X$ with klt\, singularities and torsion (but nontrivial) canonical divisor $K_X$. The \emph{index} of $X$ is the smallest integer~$d\geq 2$ such that $d K_X=0$.
	\end{defi}
	
	Under this assumption, the divisorial sheaf $\omega_X$ satisfies $\omega_X^{[d]} \cong \cO_X$, it is thus a reflexive $d$-th root of the structural sheaf. 
	
	Following~\cite[Definition~5.19]{KM}, if $X$ is a $K$-torsion variety of index $d$, the quasi-\'etale cyclic cover associated to $\omega_X$, with group $\mu_d$ (the order $d$ cyclic group),~is:
	\[
	Y\coloneqq \Spec \left(\oplus_{i=0}^{d-1}\omega_X^{[-i]}\right) \overset{p}{\longrightarrow} X.
	\]
	Since $\omega_X$ is locally free on $X_\reg$, the restriction  $p^{-1}(X_\reg)\to X_\reg$ 
	is \'etale, with $p^{-1}(X_\reg)\subset Y_\reg$. Following Kawamata~\cite[Section~8]{Kawamata}, we call~$Y$ the \emph{canonical cover} of~$X$. Recall that $Y$ is projective, irreducible, normal~(this is a consequence of~\cite[Proposition~2]{Reid_canonical}), it has canonical singularities~\cite[Corollary~5.21]{KM} and $K_Y = 0$. More generally, if $X$ is klt\, and $p\colon Y\to X$ is any quasi-\'etale cover with $Y$ normal, then $Y$ is also klt\, by~\cite[Proposition~5.20]{KM}.

	\begin{prop}\label{prop:class_group}
		Let $p\colon Y \to X$ be a quasi-\'etale cyclic cover of degree $d\geq 2$ between projective normal varieties.
		We have the following exact sequence of class groups:
		\begin{equation}\label{eq:exact_seq}
			\{0\}\longrightarrow \mu_d \longrightarrow \Cl(X) \longrightarrow \Cl(Y)^{\mu_d} \longrightarrow \{0\}.
		\end{equation}
		Moreover:
		\begin{enumerate}[label=(\roman*)]
			\item
			\label{prop:class_group_i}  If $ q(Y) = 0$, then the group $\Cl(X)$ is finitely generated.
			\item\label{prop:class_group_ii} If $K_Y = 0$, then $r K_X =0$ for some divisor $r$ of $d$.
			
		\end{enumerate}
	\end{prop}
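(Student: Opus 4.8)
The plan is to reduce everything to the regular loci, where $p$ becomes a Galois \'etale covering, and then argue by descent. Put $U\coloneqq X_\reg$ and $V\coloneqq p^{-1}(U)$. As recalled above, Zariski--Nagata purity forces $p$ to be \'etale over $U$, so $q\coloneqq p|_V\colon V\to U$ is a connected \'etale cyclic covering of degree $d$ on which $\mu_d$ acts freely; moreover $V$ is smooth and $Y\setminus V=p^{-1}(X_\sing)$ has codimension $\geq 2$ in $Y$. Since $X$ and $Y$ are normal, restriction gives $\mu_d$-equivariant isomorphisms $\Cl(X)\cong\Pic(U)$ and $\Cl(Y)\cong\Pic(V)$.

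For~\eqref{eq:exact_seq} I would invoke Galois descent along the $\mu_d$-torsor $q$: line bundles on $U=V/\mu_d$ are the same as $\mu_d$-linearized line bundles on $V$, and comparing linearized, non-equivariant and invariant Picard groups yields the exact sequence
\[
0\lra H^1\!\bigl(\mu_d,\cO_V(V)^\ast\bigr)\lra \Pic(U)\lra \Pic(V)^{\mu_d}\lra H^2\!\bigl(\mu_d,\cO_V(V)^\ast\bigr).
\]
Here the decisive point is that $Y$ is normal and \emph{projective}: then $\cO_Y(Y)=\IC$, and since $\cO_Y$ is $S_2$ and $Y\setminus V$ has codimension $\geq 2$ one gets $\cO_V(V)=\IC$, with $\mu_d$ acting trivially. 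Hence $H^1(\mu_d,\IC^\ast)=\Hom(\mu_d,\IC^\ast)\cong\mu_d$ and $H^2(\mu_d,\IC^\ast)=\IC^\ast/(\IC^\ast)^d=0$ (as $\mu_d$ is cyclic and $\IC^\ast$ is divisible), which is exactly~\eqref{eq:exact_seq}; one checks that the left-hand map sends a character $\chi$ to the eigensheaf $(p_\ast\cO_Y)_\chi$, so that its image is $\ker(q^\ast)$. This identification of the two outer terms is the only delicate step; everything else is formal.

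For~\ref{prop:class_group_i}: by~\eqref{eq:exact_seq}, $\Cl(X)$ is an extension of the subgroup $\Cl(Y)^{\mu_d}\leq\Cl(Y)$ by the finite group $\mu_d$, so it suffices to prove that $\Cl(Y)$ is finitely generated. Choose a resolution $f\colon\widetilde Y\to Y$; proper pushforward of prime divisors gives a surjection $\Pic(\widetilde Y)=\Cl(\widetilde Y)\twoheadrightarrow\Cl(Y)$ whose kernel is generated by the finitely many $f$-exceptional prime divisors, so it is enough that $\Pic(\widetilde Y)$ be finitely generated, i.e.\ that $\Pic^0(\widetilde Y)=0$, i.e.\ that $h^1(\widetilde Y,\cO_{\widetilde Y})=0$. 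Finally $h^1(\widetilde Y,\cO_{\widetilde Y})=h^1(Y,\cO_Y)=q(Y)=0$, the first equality being where one uses that $Y$ has rational singularities (which is the case whenever this statement is applied, $Y$ being a canonical cover); this is the point requiring care, as the vanishing $q(Y)=0$ alone does not control the class group of an arbitrary normal projective $Y$.

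For~\ref{prop:class_group_ii}: since $p$ is \'etale in codimension one, the ramification formula gives $K_Y=p^\ast K_X$ in $\Cl(Y)$. If $K_Y=0$, then $K_X\in\ker\bigl(p^\ast\colon\Cl(X)\to\Cl(Y)\bigr)$, which by~\eqref{eq:exact_seq} equals $\mu_d$; since $\mu_d$ has order $d$, the order $r$ of $K_X$ divides $d$, that is, $rK_X=0$.
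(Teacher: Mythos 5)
Your derivation of the exact sequence and your proof of~\ref{prop:class_group_ii} are essentially the paper's own: the descent sequence for line bundles along the $\mu_d$-torsor $V\to U$ is exactly the five-term exact sequence of the spectral sequence $H^i(\mu_d,H^j(Y^\circ,\cO_{Y^\circ}^\ast))\Rightarrow H^{i+j}(X_\reg,\cO_{X_\reg}^\ast)$ that the paper writes down, and your identifications $H^0(V,\cO_V^\ast)=\IC^\ast$ (via normality, projectivity and the codimension of $Y\setminus V$), $H^1(\mu_d,\IC^\ast)\cong\mu_d$ and $H^2(\mu_d,\IC^\ast)=0$ are the same computations; likewise your argument for~\ref{prop:class_group_ii} ($p^\ast K_X=K_Y$ in $\Cl(Y)$ and $\ker(p^\ast)\cong\mu_d$) is the paper's first argument, the paper merely adding a second, more explicit proof via $\det(p_\ast\cO_Y)$. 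The genuine divergence is in~\ref{prop:class_group_i}. The paper stays on $Y$ itself: it identifies the subgroup $\Cl_a(Y)$ of classes algebraically equivalent to zero with Lang's Picard variety, asserts that its dimension is $q(Y)=0$, and concludes by Kahn's theorem that $\NS(Y)=\Cl(Y)/\Cl_a(Y)$ is finitely generated for any normal projective variety. You instead pass to a resolution $\widetilde Y\to Y$ and use the surjection $\Pic(\widetilde Y)\twoheadrightarrow\Cl(Y)$ with kernel generated by the exceptional divisors; this costs you the extra hypothesis $h^1(\widetilde Y,\cO_{\widetilde Y})=h^1(Y,\cO_Y)$, i.e.\ (a piece of) rational singularities, which is not among the stated assumptions. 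You are right to flag this rather than sweep it under the rug: for a general normal projective $Y$ the vanishing $q(Y)=0$ does not control $\Cl(Y)$ (the projective cone over an elliptic curve has $q=0$ but non--finitely generated class group, and it carries quasi-\'etale cyclic quotients), so the paper's own step ``the dimension of the Picard variety is $q(Y)$'' is also implicitly using more than normality, since Lang's Albanese is a birational invariant computed on a resolution. In every application in the paper $Y$ is a canonical cover and hence has canonical, in particular rational, singularities, so your argument (and the paper's) applies there; it would be worth recording this hypothesis explicitly in the statement.
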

	
	\begin{proof} 
		This statement extends~\cite[Proposition~2.8]{OS1} to the singular setup, using similar arguments. 
		By the Zariski--Nagata purity theorem, since $p$ is quasi-\'etale, it is \'etale over the nonsingular locus $X_\reg$ and $Y^\circ\coloneqq p^{-1}(X_\reg)$ is an open subset of $Y_\reg$ whose complementary in $Y$ has codimension at least two.  The restriction
		$p\colon Y^\circ \to X_\reg$ is an \'etale cyclic cover with group $\mu_d$. Following the methods and notation in~\cite[\S5.2]{Gro}, there are two  spectral sequences associated to $p$, applied here to the sheaf~$\cO_{Y^\circ}^\ast$:
		\begin{align*}
			\prescript{a}{}{E}_2^{i,j}&\coloneqq H^i(X_\reg, R^j p_\ast^{\mu_d}\cO_{Y^\circ}^\ast)\Rightarrow R^{i+j}\Gamma^{\mu_d}_{Y^\circ}(\cO_{Y^\circ}^\ast),\\
			\prescript{b}{}{E}_2^{i, j}& \coloneqq H^i(\mu_d, H^j(Y^\circ, \cO_{Y^\circ}^\ast))\Rightarrow R^{i+j}\Gamma^{\mu_d}_{Y^\circ}(\cO_{Y^\circ}^\ast).
		\end{align*}
		Since $\mu_d$ acts freely on $Y^\circ$, the functor $p_\ast^{\mu_d}$ is exact so the first spectral sequence gives $R^{i}\Gamma^{\mu_d}_{Y^\circ}(\cO_{Y^\circ}) = H^{i}(X_\reg,  p_\ast^{\mu_d}\cO_{Y^\circ})$. The five-terms exact sequence associated to the second spectral sequence writes:
		\[
		0\rightarrow	\prescript{b}{}{E}_2^{1, 0}\rightarrow H^1(X_\reg,  p_\ast^{\mu_d}\cO_{Y^\circ}^\ast) \rightarrow 	\prescript{b}{}{E}_2^{0, 1} \rightarrow 	\prescript{b}{}{E}_2^{2, 0}\rightarrow  H^2(X_\reg,  p_\ast^{\mu_d}\cO_{Y^\circ}^\ast).
		\]
		Let us compute the first terms of this exact sequence.
		\begin{itemize}
			\item Since $Y$ is normal and projective, we get:
			\[
			\prescript{b}{}{E}_2^{1, 0} = H^1(\mu_d, H^0(Y^\circ, \cO_{Y^\circ}^\ast)) =  H^1(\mu_d, H^0(Y, \cO_{Y}^\ast)) = H^1(\mu_d, \IC^\ast), 
			\]
			where $\IC^\ast$ is considered as a trivial $\mu_d$-module. An easy computation of the group cohomology of $\mu_d$ gives $\prescript{b}{}{E}_2^{1, 0}  \cong \mu_d$.
			
			\item We have $p_\ast^{\mu_d}\cO_{Y^\circ}^\ast = \cO_{X_\reg}^\ast$  and since $X$ is normal we get:
			\[
			H^1(X_\reg,  p_\ast^{\mu_d}\cO_{Y^\circ}^\ast) = H^1(X_\reg, \cO_{X_\reg}^\ast) = \Pic(X_\reg) = \Cl(X).
			\]
			
			\item Since $Y$ is normal, we get:
			\[
			\prescript{b}{}{E}_2^{0, 1} = H^0(\mu_d, H^1(Y^\circ, \cO_{Y^\circ}^\ast)) = \Pic(Y^\circ)^{\mu_d} = \Cl(Y)^{\mu_d}.
			\]
			
			\item Similarly,
			\[
			\prescript{b}{}{E}_2^{2, 0} =  H^2(\mu_d, H^0(Y^\circ, \cO_{Y^\circ}^\ast)) = H^2(\mu_d, \IC^\ast) = \{0\}.
			\]
			We thus obtain the exact sequence:
			\[
			\{0\}\longrightarrow \mu_d \longrightarrow \Cl(X) \overset{p^\ast}{\longrightarrow} \Cl(Y)^{\mu_d} \longrightarrow \{0\}.
			\]
		\end{itemize}
		\noindent \textit{Proof of assertion~\ref{prop:class_group_i}.} The subgroup $\Cl_a(Y)$ of $\Cl(Y)$ parametrizing linear equivalence classes of Weil divisors on $Y$ algebraically equivalent to zero admits a structure of abelian variety defined by Lang~\cite[IV\S4]{Lang} as the Picard variety $\PP(Y)\coloneqq \Alb(Y)^\vee$, the dual of the Albanese variety of $Y$ (see for instance~\cite{BGS} for more details). The dimension of the Picard variety is thus $q(Y) = h^1(Y, \cO_Y) = 0$ by assumption, so $\Cl_a(Y) = \{0\}$. Since the N\'eron--Severi group $\NS(Y)\coloneqq \Cl(Y)/\Cl_a(Y)$ is finitely generated by~\cite[Th\'eor\`eme~3]{Kahn}, we obtain that $\Cl(Y)$ is finitely generated. Using the exact sequence~\eqref{eq:exact_seq} we get that $\Cl(X)$ is also finitely generated. 
		
		\noindent\textit{Proof of assertion~\ref{prop:class_group_ii}.}
		To prove that $dK_X = 0$, it is enough to show that ${dK_{X_\reg} = 0}$ since $X$~is normal, so after restricting to the \'etale cover ${p\colon Y^\circ\to X_\reg}$, we may assume that $Y$~and~$X$ are nonsingular. Since $p$ is \'etale, we have $p^\ast \cO_X(K_X) = \cO_Y(K_{Y}) = \cO_{Y}$ and the result follows using the exact sequence~\eqref{eq:exact_seq}. Here is also a more direct argument that will be used below. Since $p$ is finite and $X$ is nonsingular, $p$~is flat so $p_\ast\cO_{Y}$ is locally free of rank $d$. By projection formula we get:
		\[
		p_\ast \cO_{Y} = p_\ast\left(p^\ast \cO_X(K_X)\right) = \cO_X(K_X)\otimes p_\ast \cO_{Y}.
		\]
		Since $p_\ast\cO_{Y}$ is locally free of rank $d$, by taking the determinant we get:
		\[
		\det(p_\ast\cO_{Y}) = \det\left(\cO_X(K_X)\otimes p_\ast \cO_{Y}\right)\cong \cO_X(K_X)^{\otimes d}\otimes \det(p_\ast\cO_{Y}),
		\]
		so finally $\cO_X(K_X)^{\otimes d} \cong \cO_X$, hence $dK_X  = 0$ and $rK_X =0$ for some divisor~$r$ of~$d$. 
	\end{proof}
	
	\section{Enriques manifolds and log-Enriques varieties}
	
	Inspired by~\cite{BNWS, OS1, Yoshikawa}, we give the following general definition of Enriques manifolds. We intend to give a sufficiently flexible definition
	to ensure that there are enough examples to make the theory interesting and to prepare the generalization given below to the singular setup using a similar point of view.
	
	\begin{defi}\label{def:enriques}
		An \emph{Enriques manifold} is a nonsingular $K$-torsion variety $X$ such that $q(X) = 0$.
	\end{defi}

	The condition of vanishing irregularity is needed to exclude bielliptic surfaces, which are nonsingular $K$-torsion projective varieties of index $2, 3, 4$ or $6$ and positive irregularity and observe that an Enriques manifold of dimension two is precisely an Enriques surface.
	
	Let $X$ be an Enriques manifold and consider its canonical cover $Y\to X$ of degree $d\geq 2$, which is a projective manifold with trivial canonical divisor. By the Beauville-Bogomolov decomposition theorem~\cite{Beauville_c1nul}, up to an \'etale cover, the variety~$Y$ splits as a product of IHS manifolds, CY manifolds and an abelian manifold. Since there is in general not much control on this \'etale cover, we focus on special types of Enriques manifolds where the variety $Y$ itself does not split, and we are interested in the ``atomic" types.
	
	\begin{defi}
		An Enriques manifold is of \emph{symplectic type} (\resp \emph{CY type}) if it admits a cyclic \'etale cover by an IHS (\resp CY) manifold.
	\end{defi}
	
	Observe that Enriques surface of symplectic type or CY type coincide since $\SU(2) = \Sp(1)$. In higher dimension these types exclude mutually (see Lemma~\ref{lem:types} below).
	Our definition is less restrictive than that of~\cite{BNWS} since we do not require any condition on the holomorphic Euler characteristic of $X$. It is also more general than that of \cite{OS1} which is what we call here Enriques manifolds of symplectic type, since every \'etale quotient of a projective IHS manifold is cyclic (see \cite[Lemma 2.3, Proposition 2.4]{OS1}). We will prove below that for both types, it is equivalent to ask that the canonical cover is of the given type. 
	
	We could \emph{a priori} define a third \emph{abelian type} by the condition of the existence of a cyclic \'etale cover by an abelian manifold, as a special case inside the subfamily of generalized hyperelliptic varieties, called Bagnera -- de Franchis varieties (see~\cite{Demleitner}). However we show below that this does not exist.
	
	\begin{lemma}\label{lem:no_etale_cyclic_abelian}
		Let $X$ be an Enriques manifold. Then $X$ admits no cyclic \'etale cover by an abelian variety.	
	\end{lemma}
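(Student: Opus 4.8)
The plan is to argue by contradiction: suppose $X$ is an Enriques manifold admitting a cyclic \'etale cover $p\colon A\to X$ with $A$ an abelian variety, of degree $d$. Since $p$ is \'etale and $X$ is $K$-torsion, $\omega_A = p^\ast\omega_X$ is torsion; but $\omega_A\cong\cO_A$, so this is consistent and does not immediately help. Instead I would exploit the irregularity. First I would note that an \'etale quotient of an abelian variety by a finite group acting freely is again, up to isogeny, built from the fact that the induced action on $H^0(A,\Omega^1_A)$ together with the translation part determines everything; more concretely, the cyclic group $G\cong\IZ/d\IZ$ acts freely on $A$, so by the standard structure of such actions (the Bagnera--de Franchis / generalized hyperelliptic picture, cf.~\cite{Demleitner}), after replacing $A$ by the abelian subvariety $A_0$ on which a generator acts by translation and quotienting, one reduces to the case where $G$ acts on $A$ with a nontrivial linear part.

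The key computation is then the irregularity of $X$. Since $p$ is \'etale of degree $d$ with Galois group $G=\IZ/d\IZ$, we have $H^1(X,\cO_X) = H^1(A,\cO_A)^{G}$, and by Hodge symmetry $q(X) = h^0(A,\Omega^1_A)^{G} = \dim\left(H^0(A,\Omega^1_A)^{G}\right)$. The hypothesis $q(X)=0$ forces $G$ to act on the $g$-dimensional space $H^0(A,\Omega^1_A)$ (where $g=\dim A = \dim X$) with \emph{no} nonzero invariants. But a cyclic group generated by an element $\phi$ acting on a nonzero finite-dimensional complex vector space $V$ always has the eigenvalue-$1$ eigenspace potentially trivial — that is allowed — so this alone is not a contradiction. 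The real obstruction comes from the translation part: if the linear part of the generator has no eigenvalue $1$, then $\phi - \id$ is invertible on $V$, which forces the affine automorphism of $A$ to have a fixed point (solve $\phi(x) + t = x$), contradicting that $G$ acts freely. Thus the linear part must have $1$ as an eigenvalue, hence $H^0(A,\Omega^1_A)^G \neq 0$, so $q(X)\geq 1$, contradicting $q(X)=0$.

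The main steps in order: (1) reduce, using the structure of free cyclic actions on abelian varieties, to analyzing the linear part $\rho\colon G\to\GL(H^0(A,\Omega^1_A))$ of a generator $\phi$ of $G$ together with its translation vector $t$; (2) observe that freeness of the $G$-action on $A$ is equivalent to the affine equation $\rho(\phi)(x)+t = x$ having no solution modulo the lattice, which (after passing to the universal cover and choosing coordinates adapted to the eigenspace decomposition) forces $\det(\rho(\phi)-\id)=0$, i.e.\ $\rho(\phi)$ fixes a nonzero holomorphic $1$-form; (3) conclude $q(X) = \dim H^0(A,\Omega^1_A)^G \geq 1$, contradicting Definition~\ref{def:enriques}.

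The step I expect to be the main obstacle is (2): making precise that \emph{freeness forces} $1$ to be an eigenvalue of the linear part. One has to be careful that the action need not be linearizable as a whole — only the linear part on $H^0(\Omega^1_A) \cong T_0 A^\vee$ is canonical — and a free affine action on a torus $V/\Lambda$ can have invertible linear part $\rho(\phi)-\id$ \emph{on $V$} yet still be free, because the fixed-point equation $(\rho(\phi)-\id)x \equiv -t \pmod{\Lambda}$ may fail to have a \emph{lattice-compatible} solution even when it has a solution in $V$. So the correct statement is subtler: if $\rho(\phi) - \id$ is invertible on $V$, the equation $(\rho(\phi)-\id)x = -t$ has a unique solution $x_0\in V$, which descends to a genuine fixed point of $\phi$ on $A = V/\Lambda$ — here \emph{no} lattice-compatibility issue arises since we need only $x_0 \bmod \Lambda$, which always exists. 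Hence invertibility of $\rho(\phi)-\id$ really does produce a fixed point, and freeness really does force an eigenvalue $1$. Writing this cleanly, and handling at the outset the reduction in step (1) so that "$G$ acts via $\phi$ with linear part $\rho(\phi)$" is literally correct (rather than up to isogeny), is where the care is needed.
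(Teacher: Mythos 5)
Your proposal is correct and takes essentially the same route as the paper's proof: write a generator of the deck group as $x\mapsto \alpha x + b$ on $A=V/\Lambda$, observe that freeness forces $\alpha-\id$ to be non-invertible (otherwise the unique solution of $(\alpha-\id)x=-b$ in $V$ descends to a fixed point on $A$, with no lattice-compatibility issue, exactly as you note), and descend the resulting $\alpha$-invariant holomorphic $1$-form to $X$ to contradict $q(X)=0$. The only superfluous part is your step (1): no Bagnera--de Franchis reduction is needed, since every automorphism of an abelian variety is already of the affine form ``group automorphism composed with translation.''
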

	
	\begin{proof} 
		We use an argument taken from~\cite[Remark~2.8]{Demleitner}, communicated to us by Martina Monti. Let $A = V/\Lambda$ be an abelian variety, where $\Lambda$ is a lattice in a finite dimensional complex vector space $V$, and assume that $g\in \Aut(A)$ is an automorphism acting freely such that $A/\langle g\rangle = X$. Decomposing $g(x) = \alpha x + b$ where $\alpha\in \GL(V)$ and $b\in\Lambda$, the freeness is equivalent to the property that there exist no $(x, \ell)\in V\times\Lambda$ such that $(\alpha -\id)x = \ell - b$, so $\alpha-\id$ is not invertible. Let $z$~be an eigenvector of~$\alpha$ for the eigenvalue~$1$. Then $g^\ast dz= dz$ and hence $dz$ descends to an element of $H^0(X, \Omega^1_X) \cong H^1(X, \cO_X)$, so $q(X) >0$: this is a contradiction.
	\end{proof}
	
	\begin{prop}\label{prop:chi}
		Let $p\colon Y\to X$ be a cyclic \'etale cover of degree $d\geq 2$ between nonsingular varieties. Then  $\chi(\cO_Y) = d\chi(\cO_X)$. In particular, if $Y$ is an IHS manifold, then $d$~divides~$\frac{1}{2}\dim Y+1$.
	\end{prop}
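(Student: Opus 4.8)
The plan is to treat the two assertions in turn. The first, $\chi(\cO_Y)=d\,\chi(\cO_X)$, is Hirzebruch--Riemann--Roch combined with the fact that an \'etale map pulls the Todd class back; the second is the elementary computation of $\chi(\cO_Y)$ for an IHS manifold.

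First I would establish the multiplicativity of the holomorphic Euler characteristic. Since $p$ is finite we have $R^{>0}p_\ast\cO_Y=0$, hence $H^i(Y,\cO_Y)\cong H^i(X,p_\ast\cO_Y)$ for all $i$ and $\chi(\cO_Y)=\chi(X,p_\ast\cO_Y)$. Because $p$ is \'etale the canonical morphism $p^\ast\Omega^1_X\to\Omega^1_Y$ is an isomorphism, so $T_Y\cong p^\ast T_X$ and therefore $\mathrm{td}(T_Y)=p^\ast\mathrm{td}(T_X)$. By Hirzebruch--Riemann--Roch,
\[
\chi(\cO_Y)=\int_Y\mathrm{td}(T_Y)=\int_Y p^\ast\mathrm{td}(T_X)=d\int_X\mathrm{td}(T_X)=d\,\chi(\cO_X),
\]
the third equality being the projection formula $p_\ast p^\ast=d\cdot\id$ on top cohomology (equivalently $p_\ast[Y]=d[X]$). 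Note that cyclicity plays no role here; alternatively, writing $p_\ast\cO_Y\cong\bigoplus_{i=0}^{d-1}\cL^{\otimes i}$ for the $d$-torsion line bundle $\cL$ defining the cyclic cover, each $\cL^{\otimes i}$ is numerically trivial, so $\chi(X,\cL^{\otimes i})=\chi(X,\cO_X)$ by Hirzebruch--Riemann--Roch, and summing over $i$ gives the same conclusion.

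For the second statement, assume $Y$ is an IHS manifold and put $2n=\dim Y$. Then $\bigoplus_p H^0(Y,\Omega^p_Y)$ is generated by the symplectic form $\sigma_Y\in H^0(Y,\Omega^2_Y)$, so $h^{q,0}(Y)=1$ for even $q$ with $0\le q\le 2n$ and $h^{q,0}(Y)=0$ otherwise. By Hodge symmetry $h^{0,q}(Y)=h^{q,0}(Y)$, whence
\[
\chi(\cO_Y)=\sum_{q=0}^{2n}(-1)^q h^{0,q}(Y)=\sum_{k=0}^{n}1=n+1=\tfrac12\dim Y+1.
\]
Combining this with the first part, $\tfrac12\dim Y+1=d\,\chi(\cO_X)$ with $\chi(\cO_X)\in\IZ$, so $d$ divides $\tfrac12\dim Y+1$.

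As for the main obstacle, there is essentially none: the proof is entirely standard. The only point worth spelling out is the behaviour under the \'etale morphism, namely that \'etaleness yields a genuine isomorphism $p^\ast T_X\cong T_Y$ (not merely a morphism), so that the Todd class pulls back and the fundamental class pushes forward with multiplicity $d$; granting this, both assertions follow immediately.
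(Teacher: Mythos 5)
Your proof is correct. For the multiplicativity $\chi(\cO_Y)=d\,\chi(\cO_X)$ your primary route differs slightly from the paper's: you pull back the Todd class along the \'etale map and use $p_\ast[Y]=d[X]$, which indeed shows that cyclicity is irrelevant and the identity holds for an arbitrary finite \'etale cover of degree $d$ between nonsingular projective varieties. The paper instead exploits the cyclic structure: it writes $p_\ast\cO_Y\cong\cO_X\oplus L\oplus\cdots\oplus L^{d-1}$ with $L^{\otimes d}\cong\cO_X$, notes that each $L^{i}$ is numerically trivial, and concludes $\chi(L^{i})=\chi(\cO_X)$ by Hirzebruch--Riemann--Roch --- which is exactly the alternative argument you sketch in your last sentence of that paragraph. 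Both are HRR at bottom; yours buys generality (no cyclicity needed), while the paper's decomposition of $p_\ast\cO_Y$ is reused elsewhere in the text (e.g.\ in the proof of Proposition~\ref{prop:class_group}\ref{prop:class_group_ii}), which is presumably why it is the one spelled out. Your treatment of the IHS case is identical to the paper's: Hodge symmetry plus the fact that $\bigoplus_q H^0(Y,\Omega^q_Y)$ is generated by powers of the symplectic form gives $\chi(\cO_Y)=\tfrac12\dim Y+1$, whence the divisibility.
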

	
	\begin{proof}The statement and the argument follow~\cite[\S2]{OS1}.
		Since $p$ is a finite projective morphism, we have $\chi(\cO_Y) = \chi(p_\ast\cO_Y)$. Since $p\colon Y\to X$ is an \'etale cyclic cover of order $d$, there exists a line bundle $L$ on $X$ such that $p_\ast \cO_{Y}\cong \cO_{X}\oplus L\oplus\cdots\oplus L^{d-1}$ with $L^{\otimes d}\cong \cO_{X}$.
		We thus have $\chi(\cO_Y) = \sum_{i=0}^{d-1}\chi(L^{i})$. Since $X$ is nonsingular, by the Hirzebruch--Riemann--Roch theorem, the Euler characteristic of $L^i$ depends only on the rational numerical class of $L$, which is trivial since $L^{d}\cong \cO_X$. We thus get $\chi(\cO_Y)= d \chi(\cO_X)$.	
		
		By Hodge symmetry, we have:
		\[
		\chi(\cO_Y) = \sum_{i=0}^{\dim Y} (-1)^i \dim H^i(Y, \cO_Y)
		=\sum_{i=0}^{\dim Y} (-1)^i \dim H^0(Y, \Omega_Y^{i}).
		\]
		If $Y$ is an IHS manifold, the spaces $H^0(Y, \Omega_{Y}^{i})$ are zero for odd $i$ and are generated by $\omega_Y^{i/2}$ for even $i$, so $\chi(\cO_Y) = \frac{1}{2}\dim Y$+1. Then the index $d$ divides $\frac{1}{2}\dim Y+1$.
	\end{proof}

	\begin{lemma}\label{lem:types}
		Enriques manifolds of symplectic or CY type are even-dimensional and the two types exclude each other if the dimension is strictly greater than $2$.
	\end{lemma}
	
	\begin{proof}
		An Enriques manifold $X$ of IHS type is clearly even-dimensional. If $X$ is of CY type, let $Y\to X$ be a cyclic \'etale cover of degree $d\geq 2$, where $Y$ is a CY manifold. If $Y$ is odd-dimensional, then its Euler characteristic is $\chi(Y, \cO_Y) = 0$, so $\chi(X,\cO_X) = 0$  by Proposition~\ref{prop:chi}. But since the quotient is \'etale, $H^0(X,\Omega_X^p) = H^0(Y, \Omega_Y^p)^\inv = 0$ for $0<p<n$, and $H^0(X, \omega_X) = 0$ since $K_X$ is torsion, giving $\chi(X, \cO_X) = 1$: this is a contradiction so $Y$ is even-dimensional. Since even dimensional CY manifolds and IHS manifolds are simply connected, in each type the cyclic \'etale cover of the definition is the universal cover, hence both types exclude each other. 
	\end{proof}
	
	\begin{cor}\label{cor:CY_canonical}
		An Enriques manifold is of CY type if and only if its canonical cover is a CY manifold, and it admits a unique cyclic \'etale CY cover. Its index is always two.
	\end{cor}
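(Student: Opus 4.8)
The plan is to prove both implications and then pin down the index. For the backward direction, if the canonical cover $Y \to X$ is a CY manifold, then by definition $X$ admits a cyclic \'etale cover by a CY manifold, so $X$ is of CY type; here I would invoke that the canonical cover of a $K$-torsion variety is indeed \'etale over all of $X$ when $X$ is nonsingular (as recalled after Definition~\ref{def:K_torsion}), and that $K_Y = 0$ together with the CY property of the cover guarantees $Y$ is a genuine CY manifold. For the forward direction, suppose $X$ is of CY type with a cyclic \'etale cover $\pi\colon Y \to X$, $Y$ a CY manifold. By Lemma~\ref{lem:types}, $Y$ is even-dimensional, hence simply connected, so $Y$ is the universal cover of $X$ and $\pi_1(X) \cong \IZ/d\IZ$ where $d = \deg \pi$. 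The key point is to identify this cover with the canonical cover. Since $\pi$ is \'etale, $\pi^\ast \omega_X = \omega_Y \cong \cO_Y$, so $\pi^\ast \cO_X(K_X)$ is trivial; as $\omega_X$ corresponds to a torsion element of $\Pic(X)$ that becomes trivial after pulling back along the universal cover, its order must divide $d$, and by minimality of the degree of a connected \'etale cover trivializing it, the canonical cover $Y' \to X$ is a quotient of $Y \to X$, hence (both being connected \'etale covers and $Y$ simply connected) $Y' = Y$.

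Next I would establish uniqueness of the cyclic \'etale CY cover. Any connected \'etale cover of $X$ is dominated by the universal cover $Y$; if such a cover $Y'' \to X$ is itself a CY manifold then $Y \to X$ factors as $Y \to Y'' \to X$, and $Y \to Y''$ is \'etale between even-dimensional CY manifolds (the target being CY and even-dimensional, hence simply connected), forcing $Y \to Y''$ to be an isomorphism. So the only cyclic \'etale CY cover is the canonical cover.

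Finally, for the index: by Lemma~\ref{lem:types} the types of symplectic and CY exclude each other, but more directly I would argue that the CY condition on $Y$ bounds the index. Since $H^0(Y, \Omega_Y^{[q]}) = H^0(Y,\Omega^q_Y) = 0$ for $0 < q < \dim Y$ and $Y$ is a CY manifold of even dimension $n$, the $\mu_d$-action on the one-dimensional space $H^0(Y, \omega_Y) = \IC$ is by a character $\chi$, and the $\mu_d$-action on cohomology of $\cO_Y$ via Hodge symmetry together with the fact that $H^0(X,\omega_X) = H^0(Y,\omega_Y)^{\mu_d} = 0$ (since $K_X$ is nontrivial torsion) shows $\chi$ is a \emph{primitive} character of $\mu_d$. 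On the other hand, $g^\ast$ acting on $H^n(Y,\IC)$ must preserve the real structure and the one-dimensional pieces $H^{n,0}$ and $H^{0,n}$; the product of all eigenvalues of $g^\ast$ on $H^\ast(Y,\IC)$ is $\pm 1$ (it is the determinant of an integral matrix of finite order), and a Hodge-theoretic parity argument on the even-dimensional CY manifold—using that the only nonzero $(p,0)$-cohomology is at $p = 0$ and $p = n$—forces the order of $\chi$, hence $d$, to equal $2$. The main obstacle here is making this last parity argument clean: I expect to need the holomorphic Lefschetz fixed point formula applied to the fixed-point-free automorphism $g$, whose vanishing $\sum (-1)^i \mathrm{tr}(g^\ast \mid H^i(Y,\cO_Y)) = 0$ combined with $H^i(Y,\cO_Y) = 0$ for $0 < i < n$ and the primitivity of the character on $H^0$ and $H^n$ yields $1 + \overline{\chi(g)} = 0$ up to sign conventions, i.e.\ $\chi(g) = -1$, so $d = 2$. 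Getting the sign in the Lefschetz formula for odd versus even $n$ right, and confirming it is consistent with $n$ even, is the delicate bookkeeping step.
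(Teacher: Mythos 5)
Your skeleton is close to the paper's (which also reduces everything to the vanishing of the intermediate cohomology of $\cO_Y$ on an even-dimensional CY manifold), but two steps have genuine gaps. First, the identification of $Y$ with the canonical cover is circular as written: from $\pi^\ast\omega_X\cong\cO_Y$ you only get that the index $r$ of $K_X$ divides $d$, so the canonical cover $Y'$ sits as an intermediate quotient $Y\to Y'\to X$ of degree $r$; saying that $Y'$ ``is a quotient of $Y$'' does not give $Y'=Y$ unless $r=d$, which is precisely what has to be proved. The paper avoids this by establishing $d=2$ \emph{first}: then $r\mid 2$ and $K_X\neq 0$ force $r=2=d$, and simple connectedness of $Y$ makes $Y\to Y'$ an isomorphism. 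Second, and more seriously, your primitivity claim is false as stated: $H^0(Y,\omega_Y)^{\mu_d}=0$ only says the character $\chi$ of $\mu_d$ on $H^0(Y,\omega_Y)$ is nontrivial, not primitive. Your holomorphic Lefschetz computation for a generator $g$ correctly yields $1+(-1)^n\lambda=0$, hence $\lambda=-1$ since $n$ is even, so $\chi$ has order exactly two --- but that alone does not give $d=2$ (e.g.\ $d=4$ with $\lambda=-1$ is not excluded by anything you wrote). The repair is immediate: since the cover is \'etale, \emph{every} nontrivial power $g^j$ is fixed-point free, and applying the same vanishing to $g^2$ gives $1+\lambda^2=2\neq 0$ if $d>2$, a contradiction.

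The paper's route is the integrated version of this and is cleaner: $\chi(\cO_Y)=1+(-1)^{\dim Y}=2$ for an even-dimensional CY manifold, and Proposition~\ref{prop:chi} gives $\chi(\cO_Y)=d\,\chi(\cO_X)$, so $d\mid 2$ and $d=2$ at once; the rest (index two, uniqueness via simple connectedness of $Y$ and Zariski's Main Theorem) then follows exactly as in your uniqueness paragraph, which is fine. I would also flag that in the forward direction you should cite Lemma~\ref{lem:types} only for even-dimensionality; the simple connectedness of even-dimensional CY manifolds is the input that makes $Y$ the universal cover, and you use it correctly.
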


	\begin{proof}
		Let $X$ be an Enriques manifold of CY type and let  $Y\to X$ be a cyclic \'etale cover of degree $d\geq 2$, where $Y$ is a CY manifold. Since $Y$ is even-dimensional by Lemma~\ref{lem:types}, we have $\chi(Y, \cO_Y) = 2$ so the relation $d\chi(X, \cO_X) = 2$ gives $d=2$ and $\pi_1(X) \cong \IZ/2\IZ$. It follows that $2K_X =0$ (see the proof of Proposition~\ref{prop:class_group}) and by assumption $K_X$ is non trivial. The canonical cover $Z\to X$ is an \'etale double cover and since $Y$ is simply connected, the  factorization $Y\to Z$ is an isomorphism by Zariski Main Theorem.
	\end{proof}

	To extend the notion of Enriques manifold to the singular setup, in a similar spirit as above, inspired by~\cite[Definition~1.1]{Zhang} we give the following definition:

	\begin{defi}\label{def:log_enriques}
		A \emph{logarithmic Enriques variety} (\emph{log-Enriques} for short) is a $K$-torsion variety $X$ such that $ q(X) = 0$.
	\end{defi}
	
	Recall that by assumption (see Definition~\ref{def:K_torsion}) log-Enriques varieties have klt singularities.
	The term \emph{log} comes from the fact that if $\widehat X$ is a resolution of singularities of $X$ with exceptional divisor $D$, then the pair $(\widehat X, D)$ is log-terminal in the sense of \cite[Definition 2.34]{KM}. Observe that  log-Enriques varieties of dimension 2 coincide with log-Enriques surfaces as defined by Zhang~\cite{Zhang, Zhang2} except for the case of trivial canonical divisor, which is excluded in our definition.

	Let $X$ be a log-Enriques variety and consider its canonical cover $Y\to X$. By the decomposition theorem of H\"oring--Peternell~\cite{HoringPeternell}, up to a quasi-\'etale cover, the variety $Y$ splits as a product of ISVs, (singular) CY varieties and an abelian manifold (recall that abelian varieties are never singular). As in the nonsingular setup, we focus on the ``atomic" types:
	
	\begin{defi}
		A log-Enriques variety is of \emph{symplectic type} (\resp \emph{CY type}, \emph{abelian type}) if it admits a cyclic quasi-\'etale cover by a symplectic variety (\resp a CY variety, an abelian manifold).
	\end{defi}
	
	In the symplectic setup, if the cyclic quasi--\'etale cover $Y$ is \resp an IHS manifold, an ISV  or a  PSV, we will make the terminology more precise by saying that $X$ is a log-Enriques variety \resp of  \emph{IHS type}, \emph{ISV type} or \emph{PSV type}. 
	
	Log-Enriques surfaces of ISV type or CY type coincide since $\SU(2) = \Sp(1)$. In higher dimension these types exclude mutually:
	
	\begin{lemma}
		Log-Enriques varieties of ISV, CY or abelian types of dimension strictly  greater than two exclude each other.
	\end{lemma}
	
	\begin{proof}
		The argument has been communicated to us by Nikolaos Tsakanikas. 
		By~\cite[Lemma~2.19]{GGK} and~\cite[Proposition 2.10]{Nakayama-Zhang}, the augmented irregularity of a log-Enriques variety $X$ is finite (precisely $\widetilde q(X)\leq \dim X$) and it is invariant under quasi-\'etale covers. If $X$ is of ISV or CY type, then $\widetilde q (X)  = 0$ so log-Enriques varieties of ISV type or of CY type cannot be simultaneously of abelian type. Moreover, by \cite[Proposition~F]{GGK} the restricted holonomy of a CY variety is a special unitary group whereas the one of an ISV is a symplectic group, and by~\cite[Lemma~4.7]{GGK} the restricted holonomy is invariant under quasi-\'etale covers, so the ISV types and the CY types exclude each other in dimension greater than two.
	\end{proof}
	
	This result is similar to the nonsingular setup, but log-Enriques varieties of PSV type may well be of other types too.
	In a different spirit, we give in \S\ref{ss:two_types} an example of a log-Enriques variety of IHS type that is a quasi-\'etale quotient of a weak Calabi--Yau variety.

	\section{Log-Enriques varieties of symplectic type}
	\label{s: symplectic type}

	Let $Y$ be a PSV and $\sigma_Y$ be a reflexive symplectic form on $Y$. Since the space $H^0(Y,\Omega^{[2]}_Y)$ is one dimensional, for any automorphism $\varphi\in \Aut (Y)$ there exists $\lambda\in\IC^\ast$ such that $\varphi^{[\ast]}\sigma_Y  = \lambda\sigma_Y$. We call $\varphi$ \emph{symplectic} if $\lambda = 1$, and \emph{nonsymplectic} otherwise. In particular, if $\varphi$ has order $d$, we call it \emph{purely nonsymplectic} if $\lambda$ is a primitive $d$-th root of unity. A finite order automorphism $\varphi$ on $Y$ is called \emph{free} if its fixed locus is empty, and \emph{\'etale} if all its nontrivial powers are free, or equivalently  if the quotient morphism $Y\to Y/\langle \varphi\rangle$ is \'etale. Clearly every free automorphism of prime order is \'etale. It is called \emph{quasi-\'etale} if the quotient morphism $Y\to Y/\langle \varphi\rangle$ is quasi-\'etale.

	\begin{prop}\label{prop:sympl_fix_point}
		Let $Y$ be a   
		PSV and $\varphi\in\Aut(Y)$ be a symplectic automorphism of order $d\geq 2$ and let $X\coloneqq Y/\langle \varphi\rangle$. Then the quotient $X$ is PSV, hence  $K_X = 0$. If moreover $Y$ is ISV, then $X$ is ISV and  $\chi(\cO_Y) = \chi(\cO_X)$. In particular, a symplectic automorphism of finite order of an IHS manifold is never \'etale.
	\end{prop}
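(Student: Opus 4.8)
The plan is to split the statement into three parts — (a) the quotient map $p\colon Y\to X$ is quasi-\'etale, (b) $\chi(\cO_X)=\chi(\cO_Y)$, (c) $K_X=0$ — and then to deduce the last assertion from (b) together with Proposition~\ref{prop:chi}.

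For (a) I would observe that the locus where $\langle\varphi\rangle$ does not act freely is the finite union $\bigcup_{k=1}^{d-1}Y^{\varphi^k}$, that each $\varphi^k$ is again symplectic (since $\varphi^{[\ast]}\sigma_Y=\sigma_Y$), and that for $1\le k\le d-1$ it is nontrivial because $\varphi$ has order $d$. So it suffices to show that a nontrivial symplectic automorphism $\psi$ of an ISV has fixed locus of codimension at least two. On $Y_\reg$ this is a pointwise computation: at a fixed point $y$ the differential $d\psi_y$ preserves $\sigma_y$, so the decomposition $T_yY_\reg=\ker(d\psi_y-\id)\oplus W$ into the $1$-eigenspace and the sum of the remaining eigenspaces is $\sigma_y$-orthogonal and $\sigma_y$ is nondegenerate on each factor; hence $\dim W$ is even, and positive (if $W=0$ then $\psi=\id$ near $y$, hence on all of $Y$ by irreducibility), so $\dim W\ge2$ and $Y^\psi\cap Y_\reg$ has codimension $\ge2$. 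Together with $\operatorname{codim}Y_\sing\ge2$ (normality of $Y$) this yields $\operatorname{codim}Y^\psi\ge2$, i.e.\ $p$ is \'etale in codimension one.

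For (b): since $p$ is finite and $|\langle\varphi\rangle|=d$ is invertible in $\IC$, one has $H^i(X,\cO_X)=H^i(Y,\cO_Y)^{\langle\varphi\rangle}$ for all $i$, so it is enough to check that $\varphi^\ast$ acts trivially on each $H^i(Y,\cO_Y)$. As $Y$ has canonical, hence rational, singularities, a $\varphi$-equivariant resolution $\widehat Y\to Y$ identifies $H^i(Y,\cO_Y)$ equivariantly with $H^i(\widehat Y,\cO_{\widehat Y})$, which by Hodge theory is the complex conjugate of $H^0(\widehat Y,\Omega^i_{\widehat Y})\cong H^0(Y,\Omega^{[i]}_Y)$; since $Y$ is an ISV the latter is spanned by a power of $\sigma_Y$ (and vanishes for odd $i$), on which $\varphi^{[\ast]}$ acts trivially because $\varphi$ is symplectic, hence so does $\varphi^\ast$ on the conjugate. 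Thus $h^i(\cO_X)=h^i(\cO_Y)$ for all $i$ and summing with signs gives $\chi(\cO_X)=\chi(\cO_Y)$. For (c), writing $\dim Y=2n$, the section $\sigma_Y^{\wedge n}$ is a nowhere-vanishing $\varphi$-invariant generator of $\omega_Y\cong\cO_Y$; over the open set $V\subset X$ whose preimage is the free locus of $\langle\varphi\rangle$ — with $\operatorname{codim}(X\setminus V)\ge2$ by (a) — the \'etale map $p$ lets this invariant generator descend to a generator of $\omega_X|_V$, and by reflexivity of $\omega_X$ the resulting section extends to $s\in H^0(X,\omega_X)$ whose zero divisor, being effective, linearly equivalent to $K_X$, and supported in codimension $\ge2$, must be zero; hence $\omega_X\cong\cO_X$ and $K_X=0$.

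Finally, if a finite-order symplectic automorphism $\varphi$ of an IHS manifold $Y$ were \'etale, then $X$ would be smooth and $p$ an \'etale cyclic cover of degree $d\ge2$, so Proposition~\ref{prop:chi} gives $\chi(\cO_Y)=d\,\chi(\cO_X)$; comparing with (b) forces $(d-1)\chi(\cO_Y)=0$, which is impossible since $\chi(\cO_Y)=\tfrac12\dim Y+1\neq0$ (as computed in the proof of Proposition~\ref{prop:chi}) and $d\ge2$. I expect the main obstacle to be part (a): transporting the classical fixed-point analysis of symplectic automorphisms of IHS manifolds to singular ISVs, and in particular checking that working on $Y_\reg$ and invoking normality genuinely suffices; the $\varphi$-equivariance of Hodge symmetry used in (b) is a close second point that needs care.
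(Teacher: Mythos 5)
Your proposal is correct and follows essentially the same route as the paper: quasi-\'etaleness from the codimension-$\ge 2$ fixed loci of the (symplectic) powers of $\varphi$, the Euler characteristic comparison via Hodge symmetry and the fact that all reflexive forms on an ISV are $\varphi$-invariant powers of $\sigma_Y$, descent and reflexive extension of $\sigma_Y^{\wedge n}$ for $K_X=0$, and the comparison with Proposition~\ref{prop:chi} for the final claim. The only notable difference is cosmetic: you spell out the eigenspace argument for the codimension of symplectic fixed loci (which the paper asserts, proving only the nonsymplectic analogue in Lemma~\ref{lem:isotropic-fixed-locus}) and you compute each $h^i$ via an equivariant resolution rather than invoking singular Hodge symmetry on $Y^\circ$ and $X_\reg$ directly.
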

	
	\begin{proof}  The argument is inspired by the proof of~\cite[Lemma~2.3]{OS1}.  The variety $X$ is normal and projective, with klt\, singularities by \cite[Proposition~5.20]{KM}. We keep notation as in the proof of Proposition~\ref{prop:class_group}. The fact that $X$ is PSV (\resp ISV)  has been proven in \cite[Proposition 2.4]{Beauville_symplectic} (\resp in \cite[Proposition 3.17]{BGMM}). For later use, we show here that $K_X=0$. Since $\varphi$ is symplectic, the symplectic form on $Y_\reg$ goes down to a symplectic form on $X_\reg$, so $K_X = 0$.
		
		Since $\varphi$ is symplectic, the regular parts of the fixed loci of its nontrivial powers are symplectic submanifolds by \cite[Proposition~3.10]{BCGPSV_Prym} so their codimensions are at least two, hence $\varphi$ is quasi-\'etale and the quotient map $p$ restricts to an \'etale cyclic cover $p\colon Y^\circ\to X_\reg$.  The restriction maps $H^0(Y, \Omega^{[i]}_Y)\to H^0(Y^\circ, \Omega^{i}_{Y^\circ})$ are isomorphisms, generated on the right hand side by the restrictions of $\wedge^i \sigma_Y$ to $Y^\circ$ since $Y$ is  an ISV. If $\varphi$ is symplectic, all these spaces are $\varphi$-invariant so we get by Hodge symmetry for singular spaces:
		\[
		\chi(\cO_Y) =\sum_{i=0}^{\dim Y} (-1)^i \dim H^0(Y^\circ, \Omega_{Y^\circ}^{i})^{\langle\varphi\rangle}.
		\]
		Since $p$ is \'etale on $Y^\circ$, we have $\Omega^1_{Y^\circ}\cong p^\ast \Omega_{X_\reg}^1$, so  $\Omega^i_{Y^\circ}\cong p^\ast \Omega_{X_\reg}^i$ for all $i$ and we obtain:
		\begin{align*}
			\chi(\cO_Y) &=\sum_{i=0}^{\dim Y} (-1)^i \dim H^0(Y^\circ, p^\ast \Omega_{X_\reg}^i)^{\langle\varphi\rangle}
			=\sum_{i=0}^{\dim Y} (-1)^i \dim H^0(X_\reg, \Omega_{X_\reg}^i),
		\end{align*}
		where the last equality is a standard fact on sections of $\langle\varphi\rangle$-linearized sheaves obtained by pullback from a quotient. Using Hodge symmetry as above, this time for $X$, we get $\chi(\cO_Y) = \chi(\cO_X)$. 
		
		In particular, if $Y$ is an IHS  manifold and if $\varphi$ is symplectic and \'etale of order $d\geq 2$, then using Proposition~\ref{prop:chi}
		we get $\frac{1}{2}\dim Y + 1 =\chi(\cO_Y) = \chi(\cO_X) = d \chi(\cO_Y)$: this is a contradiction, so $\varphi$ cannot be \'etale.
	\end{proof}
	
	\begin{example}
		Let $Y$ be a projective IHS manifold that is deformation equivalent to the Hilbert scheme of two points on a K3 surface. Assume that $Y$ admits a symplectic involution $\iota$. Then the quotient $Y/\langle\iota\rangle$ has trivial canonical bundle and it admits a partial resolution that is an ISV (more precisely an irreducible symplectic orbifold), called a \emph{Nikulin orbifold}~\cite[Definition~3.1]{CGKK}.
	\end{example}
	
	\begin{prop}
		\label{prop:conseq1}  Let $Y$  be a PSV (\resp ISV) of dimension $2n$ with a quasi-\'etale purely non symplectic automorphism $\varphi$ of order $d$.
		If $d$ divides $n$ then $K_X = 0$, otherwise  $X\coloneqq Y/\langle \varphi\rangle$ is a log-Enriques variety of PSV type  (\resp ISV type), and index $r= d/\gcd(d, n)$.
	\end{prop}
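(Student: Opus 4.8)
The plan is to analyse the cyclic quotient $p\colon Y\to X$ with group $\mu_d=\langle\varphi\rangle$ and to compute the class of $K_X$ in $\Cl(X)$ by locating $\omega_X$ among the $\mu_d$-isotypical summands of $p_\ast\cO_Y$; the triviality criterion and the exact index will both drop out of the order of that class.

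First I would fix the setup: $X$ is normal and projective of dimension $2n$, with \klt\, singularities by~\cite[Proposition~5.20]{KM}, and since $\varphi$ is quasi-\'etale the Zariski--Nagata purity theorem makes $p$ \'etale over $X_\reg$. Writing $Y^\circ\coloneqq p^{-1}(X_\reg)$, this is an open subset of $Y_\reg$ whose complement in $Y$ has codimension at least two, and $p\colon Y^\circ\to X_\reg$ is an \'etale cyclic cover with group $\mu_d$. Decomposing the finite (hence affine) push-forward into isotypical pieces gives $p_\ast\cO_Y=\bigoplus_{j=0}^{d-1}\cL_j$ with $\cL_0=\cO_X$; over $X_\reg$ the $\cL_j$ are line bundles with $\cL_j|_{X_\reg}\cong(\cL_1|_{X_\reg})^{\otimes j}$, and since $Y$ is irreducible the cover $Y^\circ\to X_\reg$ is connected, so the class $N$ of $\cL_1|_{X_\reg}$ has order exactly $d$ in $\Cl(X)=\Cl(X_\reg)$.

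Next I would pin down which summand carries the canonical sheaf. Over the \'etale locus $\omega_{Y^\circ}\cong p^\ast\omega_{X_\reg}$, so the projection formula yields a $\mu_d$-equivariant splitting $H^0(Y^\circ,\omega_{Y^\circ})\cong\bigoplus_{j=0}^{d-1}H^0\bigl(X_\reg,\omega_{X_\reg}\otimes\cL_j|_{X_\reg}\bigr)$. Since $Y$ is an ISV one has $\omega_Y\cong\cO_Y$ (trivialized by $\sigma_Y^{\wedge n}$) and $H^0(Y^\circ,\omega_{Y^\circ})=\IC\cdot\sigma_Y^{\wedge n}$ is one-dimensional, spanned by the nowhere-vanishing form $\sigma_Y^{\wedge n}$; as $\varphi$ is purely nonsymplectic of order $d$, $\varphi^{[\ast]}\sigma_Y=\zeta\sigma_Y$ for a primitive $d$-th root of unity $\zeta$, so $\varphi$ acts on this line by $\zeta^{n}$. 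Matching this weight with the $\mu_d$-weights of the $\cL_j$ forces a single summand, with index $j$ congruent to $\pm n$ modulo $d$, to be nonzero; moreover its generating section is nowhere vanishing on $X_\reg$, because its pull-back along the \'etale surjection $p$, read through the tautological trivialization of $p^\ast\cL_j$, is a nonzero multiple of $\sigma_Y^{\wedge n}$, so its zero divisor pulls back to $0$ and hence vanishes. Therefore $\omega_{X_\reg}\cong(\cL_j|_{X_\reg})^{\vee}$, and in $\Cl(X)=\Cl(X_\reg)$ the class of $K_X$ is $\pm nN$, which has order $d/\gcd(d,n)$.

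Finally I would conclude. If $d\mid n$ this order is $1$, so $\omega_X\cong\cO_X$ and $K_X=0$. Otherwise $r\coloneqq d/\gcd(d,n)\geq 2$, and an elementary check identifies $r$ with the smallest positive integer such that $r\mid d$ and $d\mid rn$ (the least $r$ with $d\mid rn$ is $d/\gcd(d,n)$, which automatically divides $d$). Since $X$ is normal, projective and \klt\, with $K_X$ torsion of index $r\geq 2$, it is a $K$-torsion variety; and $q(X)=0$ because, $\mu_d$ being linearly reductive in characteristic zero, $\cO_X=(p_\ast\cO_Y)^{\mu_d}$ is a direct summand of $p_\ast\cO_Y$, so $H^1(X,\cO_X)$ is a direct summand of $H^1(X,p_\ast\cO_Y)=H^1(Y,\cO_Y)=0$, the last vanishing as $Y$ is an ISV. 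Hence $X$ is a log-Enriques variety, of ISV type because $p\colon Y\to X$ is a cyclic quasi-\'etale cover by an ISV, of index $r$. I expect the delicate point to be the eigensheaf identification: correctly matching the symplectic weight $\zeta^{n}$ with the $\mu_d$-character of the right summand and checking that the corresponding section of $\omega_{X_\reg}\otimes\cL_j|_{X_\reg}$ is nowhere vanishing, since this is exactly what upgrades the conclusion from "$K_X$ is torsion of index dividing $d$" to the precise value $r=d/\gcd(d,n)$; the remaining ingredients are routine.
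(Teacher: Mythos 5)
Your proof is correct, and it reaches the conclusion by a genuinely different route from the paper's. The paper (following Oguiso--Schr\"oer) first invokes Proposition~\ref{prop:class_group}\ref{prop:class_group_ii} to get $r\mid d$, then computes $H^0(X,\omega_X^{\otimes r})\cong H^0(Y,\omega_Y^{\otimes r})^{\langle\varphi\rangle}$ and reads off the condition $d\mid rn$ from the weight $\xi^{rn}$ of $(\wedge^n\sigma_Y)^{\otimes r}$, finally asserting minimality of $r$. You instead decompose $p_\ast\cO_Y$ into $\mu_d$-eigensheaves $\cL_j$, use irreducibility of $Y$ to see that the class $N$ of $\cL_1|_{X_\reg}$ has order exactly $d$ in $\Cl(X)\cong \Cl(X_\reg)$, and identify $\omega_{X_\reg}\cong(\cL_j|_{X_\reg})^{\vee}$ for $j\equiv\pm n \bmod d$ by matching the symplectic weight $\zeta^n$ and checking that the descended section is nowhere vanishing. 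This gives $[K_X]=\pm nN$, hence the order $d/\gcd(d,n)$ in one stroke, with the divisibility characterization of $r$ reduced to elementary arithmetic. Your route buys two things: a cleaner minimality argument (the paper's proof, as written, establishes that the index satisfies the two divisibility conditions and then claims it is the smallest such integer, which requires a small additional descent argument you do not need), and an explicit verification that $q(X)=0$ via the direct-summand argument on $H^1$, a condition required by the definition of a log-Enriques variety but left implicit in the paper's proof. The one delicate point --- that the eigensheaf computation lives on $X_\reg$ and is transported to $\Cl(X)$ using normality of $X$ --- is handled correctly in your write-up.
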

	
	The quasi-\'etale assumption in the statement of Proposition~\ref{prop:conseq1} is automatic whenever $\dim Y>2$, since the singular locus has codimension at least two and 
	the regular part of the fixed loci in $Y_\reg$ of all the nontrivial powers of $\varphi$ are isotropic submanifolds. This is well-known to the experts, but due to a lack of reference we state and prove this result below.
	
	\begin{lemma}\label{lem:isotropic-fixed-locus}
		Let $Y$ be a PSV of dimension at least $4$ with symplectic form $\sigma_Y$ and let $\varphi$ be a  nonsymplectic automorphism of finite order on $Y$. Then any irreducible component  of the fixed locus of $\varphi$ in $Y_\reg$ is isotropic for $\sigma_Y$.
	\end{lemma}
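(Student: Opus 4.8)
The plan is to reduce everything to pointwise linear algebra on the regular locus, exploiting the eigenspace decomposition of the differential of $\varphi$ at a fixed point. First I would fix an irreducible component $F$ of the fixed locus of $\varphi$ inside $Y_\reg$. Since $\varphi$ has finite order and acts on the smooth variety $Y_\reg$, its fixed locus there is smooth by the standard linearization of finite-order automorphisms (Cartan's lemma), so $F$ is a connected complex submanifold of $Y_\reg$; it therefore suffices to show $\sigma_Y|_F = 0$. Here I write $\sigma$ for the genuine holomorphic $2$-form $\sigma_Y|_{Y_\reg}\in H^0(Y_\reg,\Omega^2_{Y_\reg})$ corresponding to the reflexive symplectic form under the restriction isomorphism $H^0(Y,\Omega^{[2]}_Y)\cong H^0(Y_\reg,\Omega^2_{Y_\reg})$.

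Next I would record the two inputs. On one hand, nonsymplecticity means $\varphi^\ast\sigma=\lambda\sigma$ for some $\lambda\in\IC^\ast$ with $\lambda\neq 1$; here I use that the reflexive pullback $\varphi^{[\ast]}$ restricts to the ordinary pullback on $Y_\reg$, which is precisely how $\varphi^{[\ast]}$ is defined. On the other hand, at a point $x\in F$ the differential $d\varphi_x\in\GL(T_xY_\reg)$ has finite order, hence is diagonalizable, and its eigenspace for the eigenvalue $1$ is exactly the tangent space $T_xF$.

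Then comes the computation. For any $u,v\in T_xY_\reg$, since $\varphi(x)=x$ we have $\sigma_x(d\varphi_x u, d\varphi_x v)=(\varphi^\ast\sigma)_x(u,v)=\lambda\,\sigma_x(u,v)$. Specializing to $u,v\in T_xF$, where $d\varphi_x u=u$ and $d\varphi_x v=v$, yields $\sigma_x(u,v)=\lambda\,\sigma_x(u,v)$, hence $(1-\lambda)\sigma_x(u,v)=0$, and since $\lambda\neq 1$ we conclude $\sigma_x(u,v)=0$. As $x\in F$ was arbitrary, $\sigma_Y$ vanishes identically on $F$, i.e. $F$ is isotropic for $\sigma_Y$.

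The argument is pointwise linear algebra, so no step is a genuine obstacle; the only points needing a little care are the smoothness of the fixed locus in $Y_\reg$ (so that $T_xF$ makes sense and is the $1$-eigenspace of $d\varphi_x$) and the bookkeeping identifying the reflexive symplectic form with an honest $2$-form on $Y_\reg$ together with the compatibility of $\varphi^{[\ast]}$ with ordinary pullback there — both of which are standard.
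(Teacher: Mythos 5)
Your proof is correct and follows essentially the same route as the paper's: both identify $T_xF$ with the $(+1)$-eigenspace of $d\varphi_x$ (using finite order for diagonalizability/smoothness of the fixed locus) and then deduce isotropy from $\sigma_x(u,v)=\lambda\sigma_x(u,v)$ with $\lambda\neq 1$. Your write-up is, if anything, slightly more careful about the identification of the reflexive form with an honest $2$-form on $Y_\reg$ and the smoothness of the fixed locus, which the paper leaves implicit.
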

	
	\begin{proof} 
		Let $\lambda\in \IC$ be such that $\varphi^{[\ast]}\sigma_Y = \lambda \sigma_Y$; by assumption $\lambda\neq 1$. Let $Z\coloneqq Y_\reg$ and $F\subset Z$ be an irreducible component of the fixed locus of $\varphi$ in $Z$.
		Consider the splitting of the restriction of the tangent bundle: $(T_Z)_{|F}=T_F\oplus N_{F|Z}$. Since $\varphi$ is of finite order, $T_F$ coincides with the $(+1)$-eigenbundle of the action of $\varphi$ on $(T_Z)_{|F}$.  For any $x\in F$ and for any $u,v\in T_{F,x}$ we thus get:
		\[\sigma_X(u,v)=\sigma_X(d\varphi_x(u),d\varphi_x(u))=(\varphi^\ast\sigma_Y)
		(u, v)=\lambda \sigma_X(u,v),
		\]
		hence, $\sigma_X(u,v)=0$ since $\lambda\neq 1$.
	\end{proof}

	\begin{proof}[Proof of Proposition~\ref{prop:conseq1}]
		Denote the quasi-\'etale quotient by $p\colon Y\to Y/\langle \varphi\rangle\coloneqq X$.
		The variety~$X$ is normal and projective, with klt\, singularities by \cite[Proposition~5.20]{KM}.
		By Proposition~\ref{prop:class_group}\ref{prop:class_group_ii}, the torsion index $r$ of the Weil divisor $K_X$ divides $d$. Using similar arguments as in the proof of Proposition~\ref{prop:sympl_fix_point}, we have:
		\begin{align*}
			H^0(X, \omega_X) &= H^0(X_\reg, \omega_{X_\reg}) = H^0(Y^\circ, p^\ast\omega_{X_\reg})^{\langle \varphi\rangle}\\ 
			&= H^0(Y^\circ, \omega_{Y^\circ})^{\langle \varphi\rangle} =H^0(Y, \omega_{Y})^{\langle \varphi\rangle}. 
		\end{align*}
		The space $H^0(Y, \omega_{Y})$ is generated by $\wedge^{n}\sigma_Y$. Since $\varphi$ is purely nonsymplectic of order $d$, we have $\varphi^\ast\left(\wedge^{n}\sigma_Y\right) = \xi^n \left(\wedge^{n}\sigma_Y\right)$, where $\xi$ is a primitive $d$-th root of unity. 
		If $d$ divides $n$, the space $H^0(X, \omega_X)$ contains a regular global section that does not vanish, so $K_X = 0$. Otherwise this global section is not $\varphi$-invariant so ${H^0(X, \omega_X) = 0}$ and $K_X$ is not trivial.  Since $rK_X= 0$, a similar computation as above gives:
		\[
		\IC = H^0(X, \omega_X^{\otimes r})\cong H^0(Y, \omega_Y^{\otimes r})^{\langle \varphi\rangle}.
		\] 
		The space $H^0(Y, \omega_Y^{\otimes r})$ is generated by the nonvanishing section $\left(\wedge^{n}\sigma_Y\right)^{\otimes r}$, that is multiplied by $\xi^{rn}$ under the action of $\varphi^\ast$, so this global section is $\varphi$-invariant if and only if $d$ divides $rn$. A similar argument shows that the irregularity of $X$ vanishes.
        
        It follows that the index $r$ is the smallest integer satisfying both conditions $r$ divides $d$ and $d$ divides $rn$. It is easy to see that  $r=d/\gcd(d, n)$ is the smallest solution (this argument originates in~\cite[proof of Proposition~2.8]{OS1}). 
	\end{proof}

	\begin{rem}\label{rem:index} We add two observations to Proposition~\ref{prop:conseq1}:
		\begin{enumerate}
			\item We have $r=d$ if and only if $d$ and $n$ are coprime.  In particular, it is not possible to construct an index six (log)-Enriques variety as a quotient of a  four-dimensional IHS manifold by an order $6$ automorphism, see~\cite[Remark~4.1]{BNWS} and~\cite[Remark~6.6]{OS1}, but see~\S\ref{ss:index6} and~\S\ref{ss:index12} for examples of log-Enriques varieties of index $6$ or $12$ using higher-dimensional quotients. 
			
			\item If $Y$ is an IHS manifold and  $\varphi$ is \'etale, by Proposition~\ref{prop:chi} futhermore $d$ divides $n+1$, so we have always $r=d$.
		\end{enumerate}
	\end{rem}
	
	\begin{cor}\label{cor:etale_is_canonical}
		An Enriques manifold is of symplectic type if and only if its canonical cover is an IHS manifold, and it admits a unique cyclic \'etale IHS cover.
	\end{cor}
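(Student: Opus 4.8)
The plan is to follow the same pattern as the proof of Corollary~\ref{cor:CY_canonical}, replacing the holomorphic Euler characteristic bookkeeping there by the index formula of Proposition~\ref{prop:conseq1}. The ``if'' direction is immediate: the canonical cover $Z\to X$ is by construction a cyclic quasi-\'etale covering, and since $X$ is nonsingular it is actually \'etale (it is \'etale over $X_\reg = X$); hence if $Z$ is an IHS manifold, then $X$ admits a cyclic \'etale cover by an IHS manifold and is of symplectic type. Since an IHS manifold is simply connected, $Z$ is then the universal cover of $X$, which already yields the uniqueness statement for that direction.

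For the converse, let $X$ be an Enriques manifold of symplectic type, let $p\colon Y\to X$ be a cyclic \'etale cover of degree $d\geq 2$ with $Y$ an IHS manifold, and let $\varphi$ generate the order-$d$ deck group. First I would check that $\varphi$ is purely nonsymplectic: if $\varphi^{[\ast]}\sigma_Y = \lambda\sigma_Y$ with $\lambda$ of order $m\mid d$ and $m<d$, then $\varphi^m$ is a symplectic automorphism of order $d/m\geq 2$ all of whose powers are free, i.e.\ an \'etale symplectic automorphism of an IHS manifold, contradicting Proposition~\ref{prop:sympl_fix_point}; so $m = d$. Writing $\dim Y = 2n$, Proposition~\ref{prop:chi} gives $d\mid n+1$, hence $\gcd(d,n)=1$, and then Remark~\ref{rem:index} (applying Proposition~\ref{prop:conseq1} to the purely nonsymplectic quasi-\'etale automorphism $\varphi$, noting that $K_X\neq 0$ forces the second alternative there) shows that the index $r$ of $X$ equals $d$.

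Now I would compare the two covers. Since $Y$ is simply connected and $p$ is a connected \'etale covering, $p$ is the universal cover of $X$ and $\pi_1(X)\cong\IZ/d\IZ$. The canonical cover $q\colon Z\to X$ is connected (the canonical cover is irreducible), cyclic, \'etale (as $X$ is nonsingular), of degree equal to the index $r = d$, with $K_Z = 0$. A connected \'etale cover of $X$ of degree $d = |\pi_1(X)|$ must be the universal cover, so $q$ and $p$ are isomorphic as coverings of $X$; in particular $Z$ is an IHS manifold. Uniqueness follows from the same principle: any cyclic \'etale cover $Y'\to X$ with $Y'$ an IHS manifold is connected and simply connected, hence a universal cover of $X$, hence isomorphic over $X$ to $Z$.

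The only point requiring genuine care is the equality $r = d$: one must exclude the possibility that the canonical cover is a proper \'etale quotient of the IHS universal cover $Y$ (such a quotient can have trivial canonical class without being symplectic, e.g.\ a Calabi--Yau manifold or a product). This is exactly where the \'etale hypothesis is used, through the divisibility $d\mid n+1$ of Proposition~\ref{prop:chi}; in the singular setting this divisibility fails in general and different types of log-Enriques varieties can indeed overlap, cf.\ \S\ref{ss:two_types}.
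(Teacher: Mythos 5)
Your proof is correct and follows essentially the same route as the paper: show the deck transformation is purely nonsymplectic by ruling out symplectic powers (the paper invokes the holomorphic Lefschetz formula, you invoke Proposition~\ref{prop:sympl_fix_point}, which is the paper's packaged version of the same fact), deduce $r=d$ from Remark~\ref{rem:index}, and then identify the canonical cover with the IHS cover via simple connectedness. Your write-up merely makes explicit the divisibility bookkeeping and the covering-space comparison that the paper leaves implicit.
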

	
	\begin{proof}
		Let $X$ be an Enriques manifold of symplectic type and let $Y\to X$ be a cyclic \'etale cover of degree $d\geq 2$, where $Y$ is an IHS manifold. Following~\cite{OS1}, the cyclic group is generated by a purely nonsymplectic automorphism, otherwise some of its powers would be symplectic, and using the  holomorphic Lefschetz formula we see that the quotient map would be ramified. By Remark~\ref{rem:index}, the index of the canonical divisor of $X$ is equal to $d$. The  canonical cover of $Z\to X$ has thus order $d$ and, since $Y$ is simply connected, $Y$ is isomorphic to $Z$.
	\end{proof}
	
	\begin{example} \label{ex:CGM}
		Let $S$ be a projective K3 surface with a nonsymplectic involution~$\iota$. The natural involution $\iota^{[2]}$ induced on the Hilbert square $Y:=S^{[2]}$ of $S$ is nonsymplectic. Its fixed locus is non empty, even if $\iota$ is an Enriques involution. The quotient map $S^{[2]}\to S^{[2]}/\langle\iota^{[2]}\rangle\eqqcolon X$ is quasi-\'etale and $K_X = 0$ by Proposition \ref{prop:conseq1}. Similarly as in the proof of Proposition~\ref{prop:sympl_fix_point}, we have:
		\[
		\chi(\cO_X) = \sum_{i=0}^2 \dim H^0(S^{[2]}, \Omega_{S^{[2]}}^{[2i]})^{\langle \varphi\rangle}.
		\]
		The symplectic form $\sigma_Y$ generates $H^0(S^{[2]}, \Omega_{S^{[2]}}^{[2]})$ and is not invariant, but
		its exterior product $\wedge^2\sigma_Y$ is $\varphi$-invariant and generates $H^0(S^{[2]}, \Omega_{S^{[2]}}^{4})$,
		so $\chi(\cO_X) =  2$ whereas $\chi(\cO_Y) = 3$. Camere--Garbagnati--Mongardi~\cite{CGM} prove that $X$ admits a crepant resolution by a Calabi--Yau manifold. By Proposition~\ref{prop:class_group}, we have an exact sequence:
		\[
		\{0\}\longrightarrow \mu_2 \longrightarrow \Cl(X) \longrightarrow \Pic(S^{[2]})^{\mu_2} \longrightarrow \{0\}
		\]
		and $\Pic(S^{[2]})$ is torsion-free. See \S\ref{ss:Enriques_involution} for a variant of this construction producing Enriques manifolds.
	\end{example}
	
	\begin{prop}\label{prop:conseq2} 
		Let $X$ be a  log-Enriques variety of PSV (\resp ISV) type of index $d$, and let $p\colon Y\to X$ be a cyclic quasi-\'etale cover such that $Y$ is a PSV (\resp ISV). Assume that $p$ has degree $d$. Then $X$ is the quotient of~$Y$ by a purely non symplectic automorphism of order $d$. If moreover $\Cl(Y)$ is torsion-free, then $Y$ is the canonical cover of $X$.
	\end{prop}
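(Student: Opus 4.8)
My plan is to first promote the covering to a Galois quotient, then to determine how its Galois generator acts on the symplectic form by the computation underlying Proposition~\ref{prop:conseq1}, and finally to recognise $Y$ as the canonical cover by playing the class-group sequence of Proposition~\ref{prop:class_group} against the torsion-freeness hypothesis. First, since $p\colon Y\to X$ is a cyclic quasi-\'etale covering of degree $d$, it is Galois with group $\mu_d$; I fix a generator $\varphi\in\Aut(Y)$, so that $X=Y/\langle\varphi\rangle$ and $\varphi$ has order $d$. As $Y$ is an ISV, $H^0(Y,\Omega^{[2]}_Y)=\IC\,\sigma_Y$, whence $\varphi^{[\ast]}\sigma_Y=\zeta\,\sigma_Y$ for a root of unity $\zeta$ whose order $e$ divides $d$; write $\dim Y=2n$. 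Because $p$ is quasi-\'etale, restriction to the \'etale locus $p\colon Y^\circ=p^{-1}(X_\reg)\to X_\reg$ gives, exactly as in the proof of Proposition~\ref{prop:conseq1} (the invariants of a $\langle\varphi\rangle$-linearised reflexive sheaf pulled back from a quotient compute the sections downstairs, and only quasi-\'etaleness is used here), the identity $H^0(X,\omega_X^{[k]})=H^0(Y,\omega_Y^{[k]})^{\langle\varphi\rangle}$ for all $k\ge 0$. Now $H^0(Y,\omega_Y^{[k]})$ is spanned by $(\wedge^n\sigma_Y)^{\otimes k}$, on which $\varphi^{[\ast]}$ acts by $\zeta^{nk}$, so it is nonzero exactly when $\zeta^{nk}=1$; on the other hand $X$ is $K$-torsion of index $d$, so $H^0(X,\omega_X^{[k]})\neq 0$ exactly when $d\mid k$ (a nontrivial torsion class has no effective representative on a projective variety). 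Hence $\zeta^n$ has order $d$, and since $e\mid d$ this forces $e=d$ (and incidentally $\gcd(n,d)=1$, cf.\ Remark~\ref{rem:index}). So $\zeta$ is a primitive $d$-th root of unity, $\varphi$ is purely nonsymplectic of order $d$, and $X=Y/\langle\varphi\rangle$, which establishes the first assertion. (Alternatively, had $e<d$, one could set $\psi=\varphi^{e}$, which is symplectic of order $d/e\ge 2$; then $Z\coloneqq Y/\langle\psi\rangle$ has $K_Z=0$ by Proposition~\ref{prop:sympl_fix_point}, the covering $Z\to X$ is cyclic quasi-\'etale of degree $e<d$ as it is dominated by $p$, and Proposition~\ref{prop:class_group}\ref{prop:class_group_ii} would then force the index of $X$ to divide $e$, a contradiction.)

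For the last statement, assume that $\Cl(Y)$ is torsion-free. Proposition~\ref{prop:class_group} supplies the exact sequence $\{0\}\to\mu_d\to\Cl(X)\xrightarrow{p^\ast}\Cl(Y)^{\mu_d}\to\{0\}$. Since $\Cl(Y)^{\mu_d}$ is a subgroup of the torsion-free group $\Cl(Y)$, the torsion subgroup of $\Cl(X)$ is contained in $\ker p^\ast=\mu_d$, and as $\mu_d$ is itself torsion we get $\mathrm{Tors}\,\Cl(X)=\ker p^\ast$, cyclic of order $d$. The class $[K_X]$ is torsion of order exactly $d$ (the index of $X$), hence generates $\mathrm{Tors}\,\Cl(X)=\ker p^\ast$. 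On the one hand, $Y$ --- being a connected cyclic quasi-\'etale covering of degree $d$ --- is the covering of $X$ attached to a generator of $\ker p^\ast$: writing $p_\ast\cO_Y\cong\bigoplus_{i=0}^{d-1}L^{[-i]}$, one has $p^{[\ast]}L\cong\cO_Y$, so $[L]\in\ker p^\ast$, and $[L]$ has order $d$. On the other hand, the canonical cover $\widetilde Y=\Spec\bigl(\bigoplus_{i=0}^{d-1}\omega_X^{[-i]}\bigr)$ is by its very construction the cyclic quasi-\'etale covering attached to the order-$d$ class $[\omega_X]=[K_X]$. Since $\langle[K_X]\rangle=\ker p^\ast$, the two coverings are attached to the same cyclic subgroup of $\Cl(X)$, hence are isomorphic over $X$ by the classification of cyclic coverings~\cite[\S5.19]{KM}; that is, $Y$ is the canonical cover of $X$.

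The whole argument rests on Propositions~\ref{prop:conseq1}, \ref{prop:sympl_fix_point} and~\ref{prop:class_group}, which I would simply quote; there is no deep new difficulty. The two places that require attention are: (i) the identity $H^0(X,\omega_X^{[k]})=H^0(Y,\omega_Y^{[k]})^{\langle\varphi\rangle}$, which I would justify by the reflexive-pullback bookkeeping over the big open set $Y^\circ$ verbatim as in Proposition~\ref{prop:conseq1}, taking care to note that only quasi-\'etaleness (and not pure nonsymplecticity) is needed; and (ii) in the last step, the identification of the covering $Y\to X$ with the cyclic covering of $X$ attached to $\ker p^\ast\subset\Cl(X)$, and of that subgroup with $\langle[\omega_X]\rangle$. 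I expect (ii) --- pinning down which generator of $\ker p^\ast$ corresponds to $Y$ and matching the $\mu_d$-grading on $p_\ast\cO_Y$ with that on $p_\ast\cO_{\widetilde Y}$ --- to be the most delicate bookkeeping, although it is entirely standard.
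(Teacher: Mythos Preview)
Your proof is correct. For the first assertion, the paper argues purely by the factorisation route that you give as your parenthetical alternative: if $\varphi$ were not purely nonsymplectic, some proper power $\varphi^k$ would be symplectic, and factoring $p$ through $\overline Y=Y/\langle\varphi^k\rangle$ yields $K_{\overline Y}=0$ by Proposition~\ref{prop:sympl_fix_point} and then a torsion order for $K_X$ strictly smaller than $d$ via Proposition~\ref{prop:class_group}\ref{prop:class_group_ii}. Your primary argument --- reading off the order of $\zeta^n$ directly from the equivalence $H^0(X,\omega_X^{[k]})\neq 0\Longleftrightarrow d\mid k$ --- is a clean direct computation that bypasses the intermediate quotient and makes $\gcd(n,d)=1$ fall out for free; it uses nothing beyond what is already established in the proof of Proposition~\ref{prop:conseq1}, so your remark that only quasi-\'etaleness is needed there is exactly the point. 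For the second assertion your argument and the paper's coincide: both invoke the exact sequence of Proposition~\ref{prop:class_group} to identify $\mathrm{Tors}\,\Cl(X)$ with the cyclic group generated by $[\omega_X]$, and then conclude that the $\cO_X$-algebra $\bigoplus_i L^{[-i]}$ defining $Y$ agrees with $\bigoplus_i\omega_X^{[-i]}$.
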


	\begin{proof}
		Let $\varphi$ be any generator of the cyclic group $\mu_d$ acting on $Y$, that is quasi-\'etale by assumption.  If $\varphi$ is symplectic, then by Proposition ~\ref{prop:sympl_fix_point} we have $K_X = 0$, this is a contradiction. Similarly, if $\varphi^k$ acts symplectically on 
		$Y$ for some strict divisor~$k$ of~$d$  we may factorize the quotient as follows:
		\[
		p\colon Y\longrightarrow \overline{Y}\coloneqq Y/\langle \varphi^k\rangle\overset{q}{\longrightarrow} \overline{Y}/\langle \overline\varphi\rangle = X,
		\]
		where $\overline \varphi$ is the class of $\varphi$ in $\langle \varphi\rangle/\langle\varphi^k\rangle$. Then $K_{\overline Y} = 0$ by Proposition~\ref{prop:sympl_fix_point} and a similar computation as in the proof of Proposition~\ref{prop:class_group}\ref{prop:class_group_ii} shows that  $\frac{d}{k}K_X = 0$. This is a contradiction, so $\varphi$ is purely nonsymplectic.
		
		Since $p$ is a cyclic quotient, there exists a reflexive sheaf $L$ on $X$ that is a $d$-torsion element in $\Cl(X)$ such that $Y = \Spec \oplus_{i=0}^{d-1}L^{[-i]}$. By Proposition~\ref{prop:class_group}, 
		if $\Cl(Y)$ is torsion-free, the torsion part of $\Cl(X)$ is generated by~$\omega_X$. It follows that the $\cO_X$-algebras $\oplus_{i=0}^{d-1}L^{[-i]}$ and $\oplus_{i=0}^{d-1}\omega_X^{[-i]}$ are equal, so $Y$ is the canonical cover of~$X$.
	\end{proof}
	
	\begin{rem} As remarked in the proof of Proposition \ref{prop:sympl_fix_point}, by a recent result of Bertini, Grossi, Mauri and Mazzon \cite[Proposition~3.17]{BGMM},  quotients of ISV by finite order automorphisms acting symplectically are again ISV, so in our proof, if $Y$ is an ISV then  $\overline Y$ is an ISV. In particular, since $Y$ and $\overline Y$ are then simply connected  by~\cite[Corollary~13.3]{GGK}, the quotient by $\varphi^k$ is quasi-\'etale, but certainly not \'etale. This remark holds for every quotient of an ISV by a finite order automorphism acting symplectically, so that the second assertion of Proposition \ref{prop:sympl_fix_point} extends to the singular setting: a symplectic automorphism of finite order of an ISV is never \'etale. 
	\end{rem}

	Let us recall some classical basic facts about the topology of a singular quasi-projective complex algebraic variety $X$. When working with the classical topology, $X$~is Hausdorff, connected and locally path-connected and it has the homotopy type of a finite CW-complex, so it is in particular locally simply connected. Thus it admits a universal cover (unramified) $p\colon\widetilde X \to X$ that inherits a complex analytic structure such that $p$ is a local analytic isomorphism, but in general $\widetilde X$ is not algebraic.
	Assuming that $X$ is normal, it is locally irreducible for the classical topology: it follows that $\widetilde X$ is irreducible and that the natural map $\pi_1(X_\reg)\to \pi_1(X)$ is surjective (see~\cite[\S0.7]{FultonLazarsfeld} and~\cite[\S2]{ADH} and references therein). 
	
	If the cyclic cover $p\colon Y \to X$ defining a log-Enriques variety of ISV type and index $d$ is \'etale, then clearly $\pi_1(X)\cong \IZ/d\IZ$. Otherwise, the topological fundamental group of $X$ may be smaller than expected. In particular, if the group $\mu_d$ acting on~$Y$ has a fixed point, then $X$ is simply connected (this is a special case of a result due to Kaneko--Yoshida, see~\cite[Lemma~1.2]{Fujiki}). 
	
	Enriques manifolds are defined in~\cite{OS1} as projective, nonsingular varieties, nonsimply connected, whose universal cover is an IHS manifold. In the singular setup, the behavior of the fundamental group and of the universal cover pushed us to use instead the properties of the canonical sheaf to define log-Enriques varieties. However, the following result remains true in the singular setup, but it is not a characterization any more.

	\begin{prop}\label{prop:OS}\text{}
		Let $X$ be a $K$-torsion variety, non simply connected, whose universal cover $Y$ is a PSV (resp. an ISV). If the fundamental group $\pi_1(X)$ acts purely nonsymplectically on $Y$, then $X$ is a log-Enriques variety of PSV (resp. ISV) type. 
	\end{prop}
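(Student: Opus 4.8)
The plan is to analyse the covering $p\colon Y\to X$, deduce that $X$ is of ISV type, and then check that $q(X)=0$; since the hypothesis already provides that $X$ is $K$-torsion, these two facts together give that $X$ is log-Enriques.

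First I would note that, $Y$ being an ISV, it is projective and hence compact, so the topological universal covering $p\colon Y\to X$ has finite degree. Thus $G\coloneqq\pi_1(X)$ is a finite group acting freely on $Y$ with $Y/G=X$, and it acts by biholomorphisms, which are algebraic automorphisms of $Y$ by projectivity; so $G\hookrightarrow\Aut(Y)$ and $p$ is a finite étale morphism. Because $H^0(Y,\Omega^{[2]}_Y)=\IC\,\sigma_Y$ is one-dimensional, $G$ acts on it through a character $\chi\colon G\to\IC^\ast$, $\varphi^{[\ast]}\sigma_Y=\chi(\varphi)\,\sigma_Y$, and the hypothesis that $G$ acts purely nonsymplectically means exactly that $\chi$ is injective. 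A finite group admitting an injective character into $\IC^\ast$ is cyclic, so $G\cong\mu_d$ with $d=|G|$; moreover $d\geq 2$ since $X$ is not simply connected, and any generator of $G$ is then automatically purely nonsymplectic of order $d$. Hence $p\colon Y\to X$ is a cyclic quasi-\'etale cover of degree $d$ by an ISV, so $X$ is of ISV type.

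It remains to verify that $X$ is a log-Enriques variety, that is — since $X$ is $K$-torsion by hypothesis — that $q(X)=h^1(X,\cO_X)=0$. The averaging projector $\frac1d\sum_{g\in G}g$ splits the unit inclusion $\cO_X\hookrightarrow p_\ast\cO_Y$ as $\cO_X$-modules (we are in characteristic $0$), so $\cO_X$ is a direct summand of $p_\ast\cO_Y$; since $p$ is finite we have $H^1(Y,\cO_Y)=H^1(X,p_\ast\cO_Y)$, hence $H^1(X,\cO_X)$ is a direct summand of $H^1(Y,\cO_Y)$. Now $q(Y)=0$: indeed $\widetilde q(Y)=0$ for every ISV (equivalently, $h^1(Y,\cO_Y)=h^0(Y,\Omega^{[1]}_Y)=0$ by Hodge symmetry on the canonical singularities of $Y$, since the exterior algebra of reflexive global forms on an ISV is generated by $\sigma_Y$). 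Therefore $q(X)=0$, and combined with the hypothesis that $X$ is $K$-torsion this shows that $X$ is a log-Enriques variety, of ISV type by the previous paragraph.

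The only points needing care are the identification of the topological universal cover with an algebraic finite étale morphism — which uses precisely that $Y$ is an ISV, hence projective — and the reading of ``acts purely nonsymplectically'' as injectivity of $\chi$, which is what forces $G$ to be cyclic; the cohomological vanishing is then a one-line direct-summand argument, so I do not anticipate a genuinely hard step.
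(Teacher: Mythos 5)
Your proof is correct and follows essentially the same route as the paper's: finiteness of $\pi_1(X)$, then cyclicity via the injectivity of the character $\pi_1(X)\to\IC^\ast$ given by the action on the one-dimensional space $H^0(Y,\Omega^{[2]}_Y)$. The only (harmless) divergences are that you deduce finiteness of the deck group from compactness of $Y$ where the paper uses $H^0(Y,T_Y)=0$ to see that $\Aut(Y)$ is discrete, and that you verify $q(X)=0$ directly by the averaging/direct-summand argument where the paper simply invokes Proposition~\ref{prop:conseq1}.
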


	\begin{proof} The proof extends~\cite[Proposition~2.4]{OS1} to the singular setup, using similar arguments. 
		The fundamental group of $X$ acts biholomorphically on its universal cover~$Y$, so $\pi_1(X)$ is a subgroup of $\Aut(Y)$. The Lie algebra of $\Aut(Y)$ is isomorphic to $H^0(Y, T_Y)$ (see~\cite[Lemma 3.4]{Matsumura-Oort}). Since $Y$~is a PSV, we have $H^0(Y, T_Y)=  \{0\}$ by~\cite[Lemma 4.6]{BL_moduli},  so $\Aut(Y)$ is a discrete group. Since the quotient morphism $p\colon Y \to X = Y/\pi_1(X)$ is \'etale, the group $\pi_1(X)$ is in bijection with any fiber of~$p$. These fibers are discrete and projective, hence finite, so $\pi_1(X)$ is finite. By assumption, the group $\pi_1(X)$ acts purely nonsymplectically on $Y$, so the group morphism $\pi_1(X)\to \IC^\ast$ sending an automorphism $\varphi$ to the complex number $\xi$ such that $\varphi^{[\ast]}\omega_Y = \xi\omega_Y$ is injective. We deduce that $\pi_1(X)$ is a cyclic group. By Proposition~\ref{prop:conseq1}, since $X$ is assumed to be a $K$-torsion variety, it is a log-Enriques variety.
	\end{proof}

	\section{Examples}

	\label{s:examples_nonsingular}
	
	\subsection{Known examples of Enriques manifolds}
	
	There is a wide diversity of geometric constructions of Enriques surfaces, we refer for instance to~\cite{CossecDolgachev, Dolgachev, Mukai}. However, in higher dimension only a few geometric constructions of nonsingular Enriques varieties exist. We briefly review the constructions given in~\cite{BNWS, OS1} and give some new examples using similar methods.
	
	For any two-dimensional complex torus $A$ with origin $0\in A$, we denote by $\Sym {n+1} A$ the $n+1$-th symmetric power of $A$, by $s\colon \Sym {n+1} A \to A$ the summation map and by $\Hilb {n+1} A$ the Hilbert scheme of $n+1$ points on $A$. Consider the Hilbert--Chow morphism $\rho\colon \Hilb {n+1} A \to \Sym {n+1} A$, the \emph{generalized Kummer variety} (of dimension $2n$) of $A$ is the fiber $\Kum n A\coloneqq (s\circ \rho)^{-1}(0)$. Consider now an automorphism~$g$ of~$A$; it is well known that it is a composition $g=t_a\circ h$ of a translation $t_a$ and $h\in \Aut_{\IZ}(A)$, and $g$ restricts to $\Kum n A$ if and only if $a$~is a $n+1$-torsion point of~$A$. Let $\omega$ be a  non--degenerate holomorphic 2--form on $A$, then $g^*\omega=\lambda \omega$ for some $\lambda\in \IC^*$. Consider now the projections $pr_i: A^{n+1}\lra A$, $i=1,\ldots n+1$, (where $A^{n+1}$ denotes the $(n+1)$-times direct product $A\times\ldots\times A$ ). As shown in \cite{Beauville_c1nul}, we have that $\psi:=pr_1^*\omega+\ldots+pr_{n+1}^*\omega$ defines a symplectic form on $A^{n+1}$ that one can pull-back on the blow-up of the diagonal of $A^{n+1}$, and this comes then from a symplectic form  $\sigma$ on $A^{[n+1]}$. Hence, the induced natural automorphism $K_n(g)$ acts again by multiplication by $\lambda$ on $\sigma$ by construction. A similar remark holds for the action of a natural automorphism on $S^{[n+1]}$ for $S$ a K3 surface. For more details see \cite{Beauville_c1nul}, \cite[Subsection 3.1 and Section 4]{BNWS}.

	\subsubsection{The Hilbert scheme of points on Enriques surfaces.} Let $E$ be an Enriques surface. Then for any $n\geq 2$, the Hilbert scheme $\Hilb n E$ of $n$ points on $E$ is an Enriques manifold of CY type, of dimension $2n$ and index two~\cite[Theorem~3.1]{OS1}.

	\subsubsection{Involutions on generalized Kummer varieties of decomposable abelian surfaces.}
	Let $A=E\times F$ be the product of two elliptic curves and consider the automorphism  $\iota_A$ of $A$ defined by $\iota_A(x,y)=(-x+u, y+v)$. For any integer $n\geq 1$, the natural automorphism $\iota_A^{[n+1]}$ of $\Hilb {n+1} A$ respects the fiber $\Kum n A$ if $u$ and $v$ are $(n+1)$-torsion points of $E$ and $F$ respectively. Moreover, $\iota_A$ is an involution on $A$ if $v$ is a $2$-torsion point.  Assuming thus that $n$ is odd, $u\in E[n+1]$ and $v\in F[2]$, it is easy to check that the natural automorphism $\Kum n {\iota_A}$ is a fixed point free involution on $\Kum n A$ if $u$ is not an $\frac{n+1}{2}$-torsion point. Under these conditions, by Propositions~\ref{prop:chi}\&\ref{prop:conseq1} the quotient $\Kum n A /\langle\Kum n {\iota_A}\rangle$ is an Enriques manifold of symplectic type, of dimension $2n$ and index two.
	
	\subsubsection{Enriques involutions on K3 surfaces}\label{ss:Enriques_involution}
	
	Let $S$ be a K3 surface carrying an \emph{Enriques involution} $\iota_S$, \ie a fixed point free involution, and let $n\geq 1$ be an odd integer. Consider the natural involution $\iota_S^{[n]}$ induced on the Hilbert scheme $\Hilb n S$. If $n$ is odd, the quotient $\Hilb n S/\langle\iota_S^{[n]}\rangle$ is an Enriques manifold of dimension~$2n$, of symplectic type and index two. If $n$ is even, then by Proposition~\ref{prop:conseq1} the quotient has trivial canonical class, so it is not 
	an Enriques manifold. See Example~\ref{ex:CGM} for a description of the case $n=2$.
	
	\subsubsection{Index $2$  Enriques manifolds from moduli spaces of stable sheaves on $K3$ surfaces.}\label{sssec:moduli-stable-sheaves}
	
	Let $S$ be a very general $K3$ surface carrying an Enriques involution $\iota_S$ as above. By genericity we can assume that $\rho(S)=10$ and that the Picard lattice $\Pic(S)$ is $\iota_S$-invariant. Given a primitive Mukai vector $v=(r,l,\chi-r)\in H^*(S,\IZ)$ and a $v$-general polarization $H\in\Pic(S)$, the induced involution $\iota_S^*$  acts on the moduli space $M_{v,H}(S)$ of $H$-stable sheaves on $S$ with Mukai vector $v$. If $\chi$ is an odd integer, the quotient $M_{v,H}(S)/\langle\iota_S^*\rangle$ is an Enriques manifold of dimension $v^2+2$, of symplectic type and index two~\cite[Theorem~5.3]{OS1}.
	This is a generalization of the previous construction.
	
	\subsubsection{Complete intersections of quadrics} For any even integer $n\geq 2$, consider the projective space $\IP^{2n+1}$ with homogeneous coordinates $[x_0:\ldots:x_n:y_0:\ldots:y_n]$. For generic quadrics $Q_j(x), \widetilde Q_j(y)$ for $j=1, \ldots, n+1$, the complete intersection:
	\[
	Y \coloneqq \{[x:y]\,|\,Q_j(x) = \widetilde Q_j(y),\quad\forall j=1,\ldots, n+1\}
	\]
	is a nonsingular CY variety (we denote here $x:=(x_0,\ldots,x_n)$ and $y:=(y_0,\ldots,y_n)$). The involution~$\iota$ of $\IP^{2n+1}$ defined by $\iota(x_i) = -x_i$ and $\iota(y_i) = y_i$ for $i=0,\ldots, n$ fixes the two $n$-dimensional projective spaces defined by $\{x_i=0\,|\, i=0,\ldots,n\}$ and $\{y_i=0\,|\, i=0,\ldots,n\}$. For generic choices of the quadrics, this fixed locus does not meet $Y$, so the quotient $Y/\langle\iota\rangle$ is an Enriques manifold of CY type, of dimension $n$ and index two. Observe that if $n$ is odd, the same construction gives an involution preserving the volume form, so the quotient is a weak Calabi--Yau manifold which is not simply connected, since $\pi_1(Y/\langle \iota\rangle)=\IZ/2\IZ$.
	
	\subsubsection{Index $3$  Enriques manifolds from  generalized Kummer varieties of special decomposable abelian surfaces.}
	
	Let $A = E\times E_\omega$ where $E$ is an elliptic curve, $\omega$ is a primitive $3$rd root of unity and $E_\omega = \frac{\IC}{\IZ\oplus \omega \IZ}$. Take $u\in E$, $v\in E_\omega$ and consider the automorphism $f_A(x, y) = (x+u, \omega y+ v)$. Assume that $u$ is a $3$-torsion point, so that $f_A$ has order $3$. Assuming that $u$ and $v$ are $(n+1)$-torsion points, the natural automorphism $f_A^{[n+1]}$ of $\Hilb {n+1} A$ respects the fiber $\Kum n A$ and the necessary condition given by Proposition~\ref{prop:chi} imposes that $3$ divides $n+1$, so we write $n+1 =  3m$. If $m(2+\omega) v \neq 0$, the natural automorphism $\Kum n {f_A}$ is a fixed point free order three automorphism on $\Kum n A$ and by Proposition~\ref{prop:conseq1}, under these assumptions, the quotient $\Kum n A /\langle\Kum n {f_A}\rangle$ is an Enriques manifold of symplectic type, of dimension $2n$ and index three.
	
	A variant of this construction, starting from a quotient $A=\frac{E\times E_\omega}{\IZ/3\IZ}$, produces for an appropriate choice of the automorphism a different family of Enriques manifolds of symplectic type as quotients of generalized Kummer manifolds, see~\cite[\S 4.3.7 - Type 6]{BNWT}. 
	
	\subsubsection{Index $4$  Enriques manifolds from  generalized Kummer manifolds of special decomposable abelian surfaces.}
	
	Similarly as above, take $A = E\times E_\ii$ where $E_\ii = \frac{\IC}{\IZ\oplus \ii \IZ}$. Take $u\in E$, $v\in E_\ii$ and consider the automorphism $f_A(x, y) = (x+u, \ii y + v)$. Assume that $u$ is a $4$-torsion point, so that $f_A$ has order $4$. Assuming that $u$ and $v$ are $(n+1)$-torsion points, the natural automorphism $f_A^{[n+1]}$ of $\Hilb {n+1} A$ respects the fiber $\Kum n A$ and the necessary condition given by Proposition~\ref{prop:chi} imposes that $4$~divides~$n+1$, so we write $n+1 =  4m$. If $2mu\neq 0$ or if ${2m(1+\ii)v\neq 0}$, the natural automorphism $\Kum n {f_A}$ is a fixed point free order four automorphism on $\Kum n A$ and  by Proposition~\ref{prop:conseq1}, under these assumptions,  the quotient $\Kum n A /\langle\Kum n {f_A}\rangle$ is an Enriques manifold of symplectic type, of dimension $2n$ and index four.
	
	\subsubsection{Enriques manifolds from $OG_6$ and $OG_{10}$ manifolds} A priori one could produce examples of Enriques manifolds by considering \'etale quotients of $OG_6$ and $OG_{10}$ manifolds and their deformations. The possible indices are $2, 4$, \resp $2,3,6$. In a recent paper \cite{BGGG_OG10} the authors show that one can not obtain Enriques manifolds as quotients of $OG_{10}$ manifolds and their deformations. The case of quotients of $OG_6$ manifolds remains on the contrary open.

	\subsection{Log-Enriques varieties of IHS type}
	
	By Proposition~\ref{prop:conseq1}, any quotient of a $2n$-dimensional IHS manifold, with $n\geq 2$,  by a purely nonsymplectic automorphism is a log-Enriques variety as long as the order of the automorphism does not divide $n$. Here, we discuss some features of examples which can be constructed from some deformation families of IHS manifolds.

	\subsubsection{Log-Enriques varieties of prime index as quotients of IHS manifolds}\label{sssec:nonsymplectic quotients}
	Nonsymplectic automorphisms of prime order acting on IHS manifolds have now been classified for all known deformation families (see \cite{BCS, BCMS, BNWT, CC, CCC, BC} for $K3^{[n]}$ type, \cite{MTW} for generalized Kummer manifolds, \cite{GrossiOG6} for $OG_6$ manifolds and \cite{BG_OG10, BC} for $OG_{10}$ manifolds).
	Let $Y$ be an IHS manifold of dimension $2n\geq 4$ and let $\varphi\in\Aut(Y)$ be nonsymplectic of prime order $p$; in particular, the order satisfies $2\leq p\leq 7$ when $Y$ is either of $\textrm{Kum}_n$--type or of $OG_6$--type, \resp $2\leq p\leq 23$ when $Y$ is either of $K3^{[n]}$--type or of $OG_{10}$--type. Proposition \ref{prop:conseq1} implies that the quotient $\pi\colon Y\to Y/\langle\varphi\rangle\eqqcolon X$ is log-Enriques of index $p$ when $p$ does not divide $n$, and in this case the second Betti number $b_2(X)$ coincides with the rank of the invariant sublattice $T_\varphi\subset H^2(Y,\IZ)$. As a consequence, this construction yields $2n$-dimensional log-Enriques varieties of index $p$ such that $(p,n)=1$ and with second Betti number $1\leq b_2(X)\leq b_2(Y)-2$ and  $b_2(X)= b_2(Y) \mod p-1$.

	As explained in~\cite{BCS} and~\cite[Lemma~3.4(2)]{CGM}, given a fixed component $Z\subset Y^\varphi$ of dimension~$s$, the action of $\varphi$ linearizes near a point $q\in Z$ as follows. If $p=2$ then $s=n$ since $Z$ is Lagrangian by~\cite[Lemma~1]{Beauville_invol} and the action diagonalizes as:
	\[
	\rm{diag}\left(\underset{s \text{ times}}{\underbrace{1, \ldots, 1}}, \underset{s\text{ times}}{\underbrace{-1, \ldots, -1}}\right).
	\]
	If $p\geq 3$ then by \cite[Lemma 3.4]{CGM} the diagonalization may be written as:
	\[
	{\rm{diag}}\left(\underset{s \text{ times}}{\underbrace{{\underbracket{1, \xi_p},\ldots, \underbracket{1, \xi_p}}}}, \underset{t \text{ blocks}}{\underbrace{{\underbracket{\xi_p^{a_1}, \xi_p^{p+1-a_1}},\ldots, \underbracket{\xi_p^{a_t}, \xi_p^{p+1-a_t}}}}},\underset{2n-2s-2t}{\underbrace{\xi_p^{\frac{p+1}{2}}, \ldots, \xi_p^{\frac{p+1}{2}}}} \right),
	\]
	where $\xi_p$ is the primitive $p$-th root of unity such that $\varphi$ acts by multiplication by~$\xi_p$ on the symplectic $2$-form, with $s\leq n$ by Lemma~\ref{lem:isotropic-fixed-locus} since $Z$~is isotropic, and $1<a_j<p$ for  $1\leq j\leq t$.
    
    Recall that when the linearization of the action of  $\varphi$ at the fixed point $q$ is ${\rm diag}(\xi_p^{\alpha_1},\ldots,\xi_p^{\alpha_n})$, with $a_i\in \IZ$, then its \emph{age} is defined as
    \[
    \age(\varphi, q):=\frac{1}{p}\sum_{i=1}^n \overline{\alpha_i},
    \]
    where $^-$ is the smallest residue modulo $p$.
    Since the group generated by $\varphi$ acts without quasi-reflection, by the Reid--Sheperd-Barron--Tai criterion for quotient singularities (see~\cite[Theorem 3.1]{Reid_canonical} and \cite[\S 4.11]{ReidYPG}), the type of the quotient singularity at $\pi(q)$ is determined by the age as follows:
	\[
	\begin{cases}
		\text{canonical} & \text{if } \age(\varphi^k,  q)\geq 1 \quad\forall k=1, \ldots, p-1,\\
		\text{terminal} & \text{if } \age(\varphi^k,  q)>1\quad\forall k=1, \ldots, p-1,\\
		\text{strictly klt} & \text{if }\exists k,\quad 0<\age(\varphi^k,  q)<1.	\end{cases}
	\]
	We compute here that $\age(\varphi, q) = \frac{n}{2}$ if $p=2$. If $p>2$, then $\age(\varphi^k,q)\geq \frac{2n-s}{p}\geq\frac{n}{p}$ for any $k=1,\ldots,p-1$. Hence the log-Enriques variety $X$ has terminal singularities along the image of $Z$ if  $p$ does not divide $n$ (this trivially implies $n\neq p$) and one of the following conditions holds:
\begin{enumerate}
	\item  $n> p$,
	\item $n< p$ and  $s<2n-p$.
\end{enumerate}

    Recall that a normal projective variety with numerically trivial canonical divisor is uniruled if and only if it does not have canonical singularities,  see e.g. \cite[Lemma~2.1]{DRTX} where this property is investigated for several log-Enriques varieties. We give in~\S\ref{ss:two_types} an example of a log-Enriques variety of IHS type and index three which has strictly klt singularities, showing in this way that it is uniruled. Observe that by an easy computation, canonical non terminal singularities are not possible on these quotients.
	
	\begin{rem} By \cite[Lemma 2.3]{Zhang}
		the maximum prime index for a log-Enriques surface is $19$. In higher dimension the prime index may be bigger. In fact in \cite{BCMS} it is shown that there exists an IHS manifold of $K3^{[2]}$-type admitting an automorphism of order $23$ acting purely nonsymplectically. By using the holomorphic Lefschetz formula one easily shows that the fixed locus is not empty and by Proposition~\ref{prop:conseq1} we get a log-Enriques variety of IHS type and index $23$. 
	\end{rem}
	\subsubsection{Log-Enriques varieties of IHS type from weak CY varieties}\label{ss:two_types}
	Let $C\subset\mathbb{P}^5$ be a smooth cubic fourfold defined by an equation of the form
	\[x_0^2L(x_1,\ldots,x_4)+G(x_1,\ldots,x_4)+x_5^3=0,\]
	where $L\in\IC[x_1,\ldots, x_4]_1$ and $G\in\IC[x_1,\ldots, x_4]_3$ are \resp a linear and a cubic form. Then there exist two commuting automorphisms $\iota,\sigma\in\Aut(C)$, \resp of order two and three, given by $\iota([x_0:\ldots :x_5])=[-x_0:x_1:\ldots :x_5]$ and $\sigma([x_0:\ldots :x_5])=[x_0:\ldots :x_4 :\zeta_3x_5]$, where $\zeta_3$ is a nontrivial third root of unity.
	
	The induced automorphisms $\iota,\sigma\in\Aut(F(C))$ act nonsymplectically on the Fano variety of lines $F(C)$ (see \cite[\S 7]{C} and \cite[Example 6.4]{BCS}). 
	The quotient $Y:=F(C)/\langle\iota\rangle$ is a singular weak CY variety~\cite{CGM}
	and the order three automorphism~$\sigma$ descends to an automorphism~$\overline{\sigma}$ of the quotient~$Y$; moreover, $\overline{\sigma}$ does not preserve the volume form on $Y$. Hence, the quotient: 
	\[
	X:=Y/\langle\overline{\sigma}\rangle=F(C)/\langle\iota\circ\sigma\rangle
	\]
	is a log-Enriques variety of IHS type and index three. On the other hand one can consider first the quotient $F(C)/\langle \sigma\rangle$. The fixed locus of $\sigma$ on $F(C)$ is a smooth surface (see \cite[Example~6.4]{BCS}), its image  determines the singular locus on  $F(C)/\langle \sigma\rangle$. By \cite[Example 6.4]{BCS} we know that the action on the holomorphic 2-form is  multiplication by $\zeta_3$. Using the formula of \S\ref{sssec:nonsymplectic quotients}, the action in local coordinates at a fixed point is $\diag(1,\zeta_3,1,\zeta_3)$. The age of $\sigma$ at a singular point is $2/3$, and the age of $\sigma^2$ is  $4/3$.  This means that the singularities are strictly klt. We consider now the quotient $F(C)/\langle\iota\circ\sigma\rangle$ which has klt\, singularities by \cite[Proposition 5.20]{KM} and we study the fixed locus of $\iota\circ\sigma$. On $C$ this is the point $\vartheta\coloneqq(1:0:0:0:0:0)$ and the cubic surface $\cK_3\coloneqq\{x_0=x_5=0, G(x_1,\ldots,x_4)=0\}$. Similar methods as in~\cite[Example~6.4]{BCS} show that  the fixed locus on $F(C)$ is the union of $27$ points corresponding to the lines on $\cK_3$ and a curve $T$ parametrising the lines of $C$ through~$\vartheta$ and cutting~$\cK_3$. Since $(\iota\circ\sigma)^2=\sigma^2$ the latter fixed locus is contained in the $\sigma$-fixed surface on $F(C)$. Comparing with the local action of $\sigma$ and putting $\zeta_6:=-\zeta_3$, we conclude that at any point of the curve~$T$, the action of $\iota\circ\sigma$ can be diagonalized as $\diag(1,\zeta_6,\zeta_6^3,\zeta_6^4)$,  and at an isolated fixed point the local action is $\diag(\zeta_6^3,\zeta_6^4, \zeta_6^3,\zeta_6^4)$. Then one computes the ages of all the powers of~$\iota\circ\sigma$ and one sees that in both cases, the quotient has strictly klt singularities. 
        Finally we get that $F(C)/\langle \sigma\rangle$ and $X$ are uniruled log-Enriques varieties.

	\subsubsection{Index $6$ log-Enriques varieties}\label{ss:index6}
	
	Let $A = E\times E_\omega$, where $E$ is an elliptic curve  and $E_\omega = \frac{\IC}{\IZ\oplus \omega \IZ}$ where $\omega$ is a primitive third root of unity. Take $u\in E$, $v\in E_\omega$ and consider the order $6$ automorphism $f_A(x, y) = (-x+u, \omega y+ v)$. Assuming that $u$ and $v$ are $(n+1)$-torsion points, the natural automorphism $f_A^{[n+1]}$ of $\Hilb {n+1} A$ respects the fiber $\Kum n A$ and, as observed at the beginning of Section~\ref{s:examples_nonsingular}, it acts purely nonsymplectically.  By Proposition~\ref{prop:conseq1}, we get that the $2n$-dimensional quotient $\Kum n A /\langle\Kum n {f_A}\rangle$ has trivial canonical bundle if $n \equiv 0 \mod 6$, but it is a log-Enriques variety of IHS type otherwise:
	\begin{itemize}
		\item of index $2$ if $n \equiv 3\mod 6$;
		\item of index~$3$ if $n \equiv \pm 2 \mod 6$;
		\item of index~$6$ if $n \equiv \pm 1 \mod 6$.
	\end{itemize}
	
	\subsubsection{Index $12$ log-Enriques varieties} \label{ss:index12}
	
	Let $A = E_\ii\times E_\omega$, where $\omega$ is a primitive $3$rd root of unity, $E_\ii = \frac{\IC}{\IZ\oplus \ii \IZ}$ and $E_\omega = \frac{\IC}{\IZ\oplus \omega \IZ}$. Take $u\in E_\ii$, $v\in E_\omega$ and consider the order $12$ automorphism $f_A(x, y) = (\ii x+u, \omega y+ v)$. Assuming that $u$ and $v$ are $(n+1)$-torsion points, the natural automorphism $f_A^{[n+1]}$ of $\Hilb {n+1} A$ respects the fiber $\Kum n A$ and, as observed at the beginning of Section~\ref{s:examples_nonsingular}, it acts purely nonsymplectically.  By Proposition~\ref{prop:conseq1}, we get that the $2n$-dimensional quotient $\Kum n A /\langle\Kum n {f_A}\rangle$ has trivial canonical bundle if $n \equiv 0 \mod 12$, but it is a log-Enriques variety of IHS type otherwise:
	\begin{itemize}
		\item of index $2$ if $n \equiv 6 \mod 12$;
		\item of index~$3$ if $n \equiv \pm 4 \mod 12$;
		\item of index~$4$ if $n \equiv \pm 3 \mod 12$;
		\item of index~$6$ if $n \equiv \pm 2 \mod 12$;
		\item of index~$12$ if $n \equiv \pm 1 \mod 12$ or  $n \equiv \pm 5 \mod 12$.
	\end{itemize}

	\subsection{Log-Enriques varieties of ISV and PSV type}\label{s:examples_singular}
	
	Although the theory of automorphism groups of irreducible symplectic varieties is not yet as well-developed as in the smooth case, we want to list here some examples of log-Enriques varieties which are obtained as quasi-\'etale cyclic quotients of an ISV or a PSV. 
	
	\subsubsection{Index $2$ log-Enriques varieties from moduli spaces of semistable sheaves on $K3$ surfaces}\label{sssec:moduli-stable-sheaves-singular} Here, we observe that Oguiso--Schr\"oer's construction of Enriques manifolds illustrated in \S \ref{sssec:moduli-stable-sheaves} generalizes to singular moduli spaces of semistable sheaves on $K3$ surfaces, which are known to be ISV~\cite{PR}. Indeed, let $S$ be a very general $K3$ surface carrying an Enriques involution $\iota_S$. By genericity we can assume that $\rho(S)=10$ and that the Picard lattice $\Pic(S)$ is $\iota_S$-invariant. Given a non-primitive Mukai vector $w=(r,l,\chi-r)\in H^*(S,\IZ)$ such that $w^2\equiv 0 \mod 4$, $w^2\geq 4$ and a $w$-general polarization $H\in\Pic(S)$, the induced involution $\iota_S^*$ acts on the moduli space $M_{w,H}(S)$ of $H$-semistable sheaves on $S$ with Mukai vector $w$, and the quotient $M_{w,H}(S)/\langle\iota_S^*\rangle$ is a  log-Enriques variety of dimension $w^2+2$, of ISV type and index two, since $\iota_S^*$ acts freely in codimension one by Lemma \ref{lem:isotropic-fixed-locus}. 
	
	Moreover, if $\chi$ is odd, $\iota_S^*$ acts freely on the regular locus of $M_{w,H}(S)$, which is the locus of $H$-stable sheaves on $S$. In order to show this, it is enough to observe that the proof of ~\cite[Theorem~5.3]{OS1} works: if a stable sheaf $\cF$ is fixed, by descent there exists a coherent sheaf $\cF'$ on $S/\langle\iota_S\rangle$ such that $\cF=p^*\cF'$, and thus $\chi=\chi(\cF)=2\chi(\cF')$, in contradiction with $\chi$ being odd.
	
	If $w=kv$ with $k\in\IZ$ and $v\in H^*(S,\IZ)$ primitive, the condition that $\chi$ is odd implies that $k$ is odd, so in particular the freeness of the action on the regular locus of $M_{w,H}(S)$ does not occur in the case that $M_{w,H}(S)$ is birational to an $OG10$ manifold.
	
	\subsubsection{Index $2$ log-Enriques varieties from relative Prym varieties} Let $S$ be a very general $K3$ surface carrying an Enriques involution $i_S$, as in \S\ref{sssec:moduli-stable-sheaves} and \S\ref{sssec:moduli-stable-sheaves-singular}. Let $\pi\colon S\to T:=S/\langle i_S\rangle$ be the quotient Enriques surface, let $C$ be a smooth curve of genus $g$ on $T$ with primitive class in $\NS(T)$ and denote  $D:=\pi^{-1}(C)$. The moduli space $M_{v,D}(S)$ is singular when $v=(0,D,2-2g)\in H^*(S,\IZ)$ (see ~\cite{ASF}) and it is an ISV (see~\cite{BCGPSV_Prym} and references therein). Moreover, since $D^2=4g-4$, it is of dimension $2(2g-1)$, hence it follows from \S\ref{sssec:moduli-stable-sheaves-singular} that $i^*_S$ acts freely on $(M_{v,D}(S))_\reg$. On the other hand, the moduli space is also endowed with the duality $j$, which is a nonsymplectic involution commuting with~$i^*_S$. The connected component $\mathcal{P}_{v,D}$ of the zero-section 
	inside the fixed locus of the symplectic involution $i^*_S\circ j$ is the so-called relative Prym variety, and it is an ISV by~\cite{ASF}. The nonsymplectic involution $i^*_S$ restricts to $\mathcal{P}_{v,D}$ and acts freely on $\mathcal{P}_{v,D}\cap (M_{v,D}(S))_\reg$. For all genera $g\geq 2$ the quotient $\mathcal{P}_{v,D}/\langle i^*_S\rangle$ is a log-Enriques variety of ISV type and of index two. 
	
	This construction also works when $S$ is a very general $K3$ surface endowed with a nonsymplectic involution $i$ such that $\NS(S)\simeq U(2)\oplus E_8(-1)^{\oplus 2}$ or $\NS(S)\simeq U\oplus E_8(-1)\oplus D_4(-1)^{\oplus 2}$. In such a case, the quotient surface $T$ is rational. By \cite{BCGPSV_Prym}, one can also construct relative Prym varieties $\cP_{v,D}$ starting from a curve $C$ on $T$ of genus $g$ and considering the moduli space $M_{v,D}(S)$ of $D$-semistable sheaves on  $S$ with Mukai vector $(0,D,1-g(D))$, where $D:=\pi^{-1}(C)$ is of genus $g(D)$. Moreover, in this case $S$ also carries an Enriques involution $i_S$ by \cite{Ohashi}, which commutes with~$i$. As a consequence, whenever $D$ is also invariant for $i_S$, the involution $i_S^*$ acts on $\cP_{v,D}$ and the quotient  $\mathcal{P}_{v,D}/\langle i^*_S\rangle$ is a log-Enriques variety of index two, which is of PSV type (\resp of ISV type) when $C$ and $D$ satisfy the assumptions of \cite[Theorem 1.2]{BCGPSV_Prym} (\resp of \cite[Theorem 1.3 and 1.4]{BCGPSV_Prym}).

	\bibliographystyle{amsplain}
	\bibliography{Bib-Enriques}
	
\end{document}